\newtheorem{theorem}{Theorem}[section]
\newtheorem{lemma}[theorem]{Lemma}
\newtheorem{prop}[theorem]{Proposition}
\newtheorem{cor}[theorem]{Corollary}
\theoremstyle{definition}
\newtheorem{definition}[theorem]{Definition}
\newtheorem{example}[theorem]{Example}
\theoremstyle{remark}
\newtheorem{remark}[theorem]{\bf{Remark}}
\numberwithin{equation}{section}
\begin{document}

\title[Strengthening of spectral radius, numerical radius, and Berezin radius inequalities]
{Strengthening of spectral radius, numerical radius, and Berezin radius inequalities}

\author{Pintu Bhunia}
\address[P.~Bhunia]{Department of Mathematics, Indian Institute of
Science, Bangalore 560012, India}
\email{\tt pintubhunia5206@gmail.com; pintubhunia@iisc.ac.in}


\thanks{The author would like to thank Prof. Apoorva Khare for useful discussions.
The author was supported by National Post-Doctoral Fellowship PDF/2022/000325 from SERB (Govt.\ of India) and SwarnaJayanti Fellowship SB/SJF/2019-20/14 (PI: Apoorva Khare) from SERB (Govt.\ of India).
The author also would like to thank National Board of Higher Mathematics (Govt.\ of India) for the financial support in the form of NBHM Post-Doctoral Fellowship 0204/16(3)/2024/R\&D-II/6747.
}



\subjclass[2020]{47A12, 47A30, 15A60, 47A10, 47B15, 26C10}

\keywords{Numerical radius, Spectral radius, Berezin number, Operator matrices, Zeros of a polynomial}

\date{\today}
\maketitle

\begin{abstract}
Suppose $\mathcal{H}_1, \mathcal{H}_2, \ldots, \mathcal{H}_n$ are arbitrary complex Hilbert spaces, and  ${\bf A}=[A_{ij}]$ is an $n\times n$ operator matrix with  $A_{ij}\in \mathcal{B}(\mathcal{H}_j, \mathcal{H}_i).$ We show that 
$w({\bf A}) \leq w\left(\begin{bmatrix}
		 a_{ij}
	\end{bmatrix}_{i,j=1}^n  \right),$
where $w(\cdot)$ denotes the numerical radius and the entries 
$$ a_{ij}=\begin{cases}
    w(A_{ii}) & \textit{if $i=j$},\\
    \sqrt{ \left( \|A_{ij}\|+\|A_{ji}\| \right)^2- \left(\|A_{ij}\| \|A_{ji}\|-w(A_{ji}A_{ij}) \right)}^{} & \textit{if $i<j$},\\
    0  & \textit{if $i>j$.}
\end{cases}$$
This bound improves $w({\bf A}) \leq w\left(\begin{bmatrix}
		 a'_{ij}
	\end{bmatrix}_{i,j=1}^n  \right),$ where $a'_{ij}=w(A_{ii})$ if $i=j$ and $a'_{ij}=\|A_{ij}\|$ if $i\neq j$ [Abu-Omar and Kittaneh, Linear Algebra Appl. 2015]. 
   We deduce an upper bound for the Kronecker products $A\otimes B$, where $A\in \mathcal{M}_n(\mathbb{C})$ and $B\in \mathcal{B}(\mathcal{H}_1)$, which refines Holbrook's classical bound $w(A\otimes B)\leq w(A)\|B\|$ [J. Reine Angew. Math. 1969], when all entries of $A$ are non-negative.  Further, we obtain the Berezin radius inequalities for $n\times n$ operator matrices where the entries are reproducing kernel Hilbert space operators, which refine Bakherad's inequalities [Czech. Math. J. 2018]. We provide
an example, which illustrates these inequalities for some concrete operators on the Hardy--Hilbert space.
Applying the numerical radius bounds, we show that if $A_i \in \mathcal{B}(\mathcal{H}_i, \mathcal{H}_1) $ and $B_i\in \mathcal{B}(\mathcal{H}_1, \mathcal{H}_i)$ for $i=1,2,$ then 
    \begin{eqnarray*}
        r(A_1B_1+A_2B_2) \leq   \frac{ 1 }{2 } \left(w(B_1A_1)+w(B_2A_2) \right) 
         +  \frac{ 1 }{2 }  \sqrt{ \left(w(B_1A_1)-w(B_2A_2)\right)^2 + 3\|B_1A_2\|\|B_2A_1\| + \eta},
    \end{eqnarray*}
where $\eta=w(B_2A_1 B_1A_2)$, and $r(\cdot)$ denotes the spectral radius. This refines the inequalities in [Kittaneh, Proc. Amer. Math. Soc. 2006] and [Abu-Omar and Kittaneh, Studia Math. 2013]. We also achieve a bound for the roots of an algebraic equation, which is sharper than Abdurakhmanov's bound [Math. USSR-Sb. (1988)].
    \end{abstract}


\section{Introduction and Main results}

Throughout this work, $\mathcal{B}(\mathcal{H})$ denotes the $C^*$-algebra of all bounded linear operators on a complex Hilbert space $ \mathcal{H}$ with inner product $\langle.,.\rangle.$
The purpose of this work is to study the numerical radius inequalities for $n\times n$ operator matrices, i.e., matrices whose entries are in $\mathcal{B}(\mathcal{H})$. Using similar approaches, we also study the Berezin radius (Berezin number) inequalities for reproducing kernel Hilbert space operators. 
We apply these inequalities to study the spectral radius inequalities for the sums, products, and commutators of bounded linear operators and to estimate the roots of an algebraic equation.

Calculating the exact value of the spectral radius of a complex matrix is not always possible. The problem becomes more challenging for operator matrices. This problem, which has a wide range of applications, has stimulated mathematicians to establish estimates for the spectral radius of operator matrices by developing spectral radius inequalities. The numerical radius is one of the nice concepts for estimating the spectral radius of a bounded linear operator.
The study of various kinds of numerical range and numerical radius
of a bounded linear operator goes back
at least to Hausdorff \cite{Hausdorff} and Toeplitz \cite{Toeplitz}  (see
also \cite{Halmos}). In fact, even earlier, it was started from the Rayleigh
quotients in the 19th century. The numerical radius has
seen widespread usage through applications in numerical analysis, functional analysis,
operator theory, systems theory, quantum information theory and quantum computing. We refer the reader to e.g.~\cite{book}
for more on this.

\begin{definition}
The numerical range $W(A)$ of a bounded linear operator $A\in
\mathcal{B}(\mathcal{H})$ is defined as $W(A) := \left\{ \langle
Ax,x\rangle: x\in \mathcal{H}, \|x\|=1  \right\}$, and the
numerical radius $w(A)$ is defined as
$w(A) := \sup\left\{ |\langle Ax,x\rangle|: x\in \mathcal{H}, \|x\|=1
\right\} = \sup\left\{ |\lambda|: \lambda\in W(A)  \right\}.$
\end{definition}

It is well known that the numerical radius defines a norm on
$\mathcal{B}(\mathcal{H})$ and is equivalent to the operator norm
$\|A\|=\sup\left\{ \| Ax\|: x\in \mathcal{H}, \|x\|=1  \right\}$ via the
relation 
\begin{eqnarray}\label{E-V}
    \frac{1}{2}\|A\| \leq w(A) \leq \|A\|.
\end{eqnarray}
These inequalities are sharp, $\frac{1}{2}\|A\|= w(A) $ if $A^2=0$ and $ w(A) = \|A\|$ if $A$ is normal.
It is also a basic fact that the norm $w(\cdot)$ is weakly unitarily invariant (i.e.,  $w(U^{*}AU) = w(A)$ for all $A\in \mathcal{B}(\mathcal{H})$ and for all unitary $U\in \mathcal{B}(\mathcal{H})$. 
In addition to that, one important inequality for the numerical radius is $w(A^n) \leq w^n(A)$ for all $n\in \mathbb{N}.$

Over the years many researchers have been trying to improve the numerical radius inequalities \eqref{E-V} (we refer to e.g. the books \cite{book} and \cite{Wu}).

Numerical radius inequalities for operator matrices have also been well studied, see e.g. \cite{Abu_LAA_2015}, \cite{Bhunia-AdM24}, \cite{Bag}, \cite{Paul}, \cite{Hiz2011} and \cite{Hou_1995}. Let $\mathcal{H}_1,\mathcal{H}_2,\ldots,\mathcal{H}_n$ be complex Hilbert spaces, and let ${\bf A}=[A_{ij}]$ be an $n \times n$ operator matrix with entries  $A_{ij}\in \mathcal{B}(\mathcal{H}_j, \mathcal{H}_i),$ the space of all bounded linear operators from $\mathcal{H}_j$ to $ \mathcal{H}_i$.
    Then ${\bf A}\in \mathcal{B}( {\oplus}_{i=1}^n\mathcal{H}_i)$ is defined as ${\bf A}x:=  \begin{pmatrix}
	\sum_{i=1}^n A_{1i}x_i,
	\sum_{i=1}^n A_{2i}x_i,
	\ldots,
	\sum_{i=1}^n A_{ni}x_i
\end{pmatrix}^T$ for all $x=  \begin{pmatrix}
	 x_1,
	x_2,
	\ldots,
	x_n
\end{pmatrix}^T \in {\oplus}_{i=1}^n\mathcal{H}_i$.

Operator matrices provide a useful tool to study Hilbert space operators, which have been extensively studied, see e.g. \cite{Halmos}.
In \cite{Hou_1995},  Hou and Du provided a useful bound for the numerical radius of an 
 $n \times n$ operator matrix ${\bf A}=[A_{ij}]$, where $A_{ij}\in \mathcal{B}(\mathcal{H}_j, \mathcal{H}_i)$.
 They proved that
 \begin{eqnarray}\label{p7}
	w({\bf A}) \leq 	w\left(\begin{bmatrix}
	\|A_{ij}\| 
\end{bmatrix}_{i,j=1}^n  \right).
\end{eqnarray}
Further, in \cite{Abu_LAA_2015}, Abu-Omar and Kittaneh provided a considerable improvement of \eqref{p7}, namely,
\begin{eqnarray}\label{p8}
	w({\bf A}) \leq 	w\left(\begin{bmatrix}
		a_{ij} 
	\end{bmatrix}_{i,j=1}^n  \right) =w\left(\begin{bmatrix}
		b_{ij} 
	\end{bmatrix}_{i,j=1}^n  \right), 
\end{eqnarray}
 \textit{where the entries }
$ a_{ij}=\begin{cases}
	    w(A_{ii}) & \textit{ if } i=j,\\
         \|A_{ij}\| & \textit{ if } i\neq j\\
	\end{cases} \textit{ and }\, b_{ij}=\begin{cases}
	    w(A_{ii}) & \textit{ if } i=j,\\
         \|A_{ij}\|+\|A_{ji}\| & \textit{ if } i< j,\\
         0 & \textit{ if } i> j.
	\end{cases}$
    
\noindent The equality in \eqref{p8} is obtained using the following well-known fact (see \cite[p. 44]{Horn} and \cite{PMSC}):
If $A=[a_{ij}]\in \mathcal{M}_n(\mathbb{C})$ with all entries $a_{ij}\geq 0$, then 
\begin{eqnarray}\label{p9}
	w(A)=\frac{1}{2}w\left( [a_{ij}+a_{ji}]_{i,j=1}^n \right)=\frac{1}{2}r\left( [a_{ij}+a_{ji}]_{i,j=1}^n \right),
\end{eqnarray}
 where $r(\cdot)$ denotes the spectral radius.  Recall that for $A \in \mathcal{B}(\mathcal{H})$,  $r(A)= \sup\{|\lambda|: \lambda \in \sigma (A) \}$, where $\sigma (A)$ denotes the spectrum of $A$. 
 
  Recently, Bhunia \cite{Bhunia-AdM24} developed an improvement of \eqref{p8} for the case $\mathcal{H}_1= \mathcal{H}_2= \ldots= \mathcal{H}_n$. In particular, he showed that $w({\bf A}) \leq w\left(\begin{bmatrix}
		a_{ij} 
	\end{bmatrix}_{i,j=1}^n  \right)$, where the entries $a_{ij}=w(A_{ii})$ if $i=j$, $a_{ij}= \sqrt{\left\| |A_{ij}|+ |A^*_{ji}| \right\|^{}   \left\| |A^*_{ij}|+ |A_{ji}| \right\|^{}}$ if $i<j$ and $a_{ij}=0$ if $i>j.$ Here $|A|$ denotes the positive square root of $A^*A,$ i.e., $|A|=\sqrt{A^*A}.$

 \smallskip
We begin by refining Abu-Omar and Kittaneh's bound \eqref{p8}. Here is the main result of this work.

\begin{theorem}\label{thm2}
Let $\mathcal{H}_1, \mathcal{H}_2, \ldots, \mathcal{H}_n$ be complex Hilbert spaces, and let ${\bf A}=[A_{ij}]$ be an $n\times n$ operator matrix with  $A_{ij}\in \mathcal{B}(\mathcal{H}_j, \mathcal{H}_i).$ Then 
\begin{eqnarray*}
    w({\bf A}) \leq w\left(\begin{bmatrix}
		a_{ij} 
	\end{bmatrix}_{i,j=1}^n  \right), \textit{ where }   a_{ij}= \begin{cases}
	   w(A_{ii}) & \textit{if } i=j,\\
        \sqrt{ \left( \|A_{ij}\|+\|A_{ji}\| \right)^2- \left(\|A_{ij}\| \|A_{ji}\|-w(A_{ji}A_{ij}) \right)}  & \textit{if } i<j,\\
        0  & \textit{if } i>j.
	\end{cases}
\end{eqnarray*} 
\end{theorem}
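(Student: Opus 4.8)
The plan is to test $w({\bf A})$ against unit vectors and reduce everything to a scalar quadratic form. Fix a unit vector $x=(x_1,\dots,x_n)^T\in\oplus_{i=1}^n\mathcal H_i$ and set $u=(\|x_1\|,\dots,\|x_n\|)^T$, a unit vector in $\mathbb C^n$ with nonnegative entries. Expanding $\langle{\bf A}x,x\rangle=\sum_{i,j}\langle A_{ij}x_j,x_i\rangle$ and applying the triangle inequality,
$$|\langle{\bf A}x,x\rangle|\le\sum_{i=1}^n|\langle A_{ii}x_i,x_i\rangle|+\sum_{i<j}\big(|\langle A_{ij}x_j,x_i\rangle|+|\langle A_{ji}x_i,x_j\rangle|\big).$$
The diagonal contributions are immediate: $|\langle A_{ii}x_i,x_i\rangle|\le w(A_{ii})\|x_i\|^2=a_{ii}u_i^2$. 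Thus the theorem hinges on the following off-diagonal pair estimate, which I would isolate as a lemma: for $P\in\mathcal B(\mathcal K,\mathcal L)$, $Q\in\mathcal B(\mathcal L,\mathcal K)$ and \emph{unit} vectors $z\in\mathcal K$, $y\in\mathcal L$,
$$|\langle Pz,y\rangle|+|\langle Qy,z\rangle|\le\sqrt{\,\|P\|^2+\|Q\|^2+\|P\|\,\|Q\|+w(QP)\,}.$$
Granting this and applying it with $P=A_{ij}$, $Q=A_{ji}$, $z=x_j/\|x_j\|$, $y=x_i/\|x_i\|$ (the case $x_i=0$ or $x_j=0$ being trivial), the right-hand side is exactly $a_{ij}$ once one expands $(\|A_{ij}\|+\|A_{ji}\|)^2$, so $|\langle A_{ij}x_j,x_i\rangle|+|\langle A_{ji}x_i,x_j\rangle|\le a_{ij}u_iu_j$. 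Summing, and using $a_{ij}=0$ for $i>j$,
$$|\langle{\bf A}x,x\rangle|\le\sum_i a_{ii}u_i^2+\sum_{i<j}a_{ij}u_iu_j=\sum_{i,j}a_{ij}u_iu_j=\big\langle[a_{ij}]u,u\big\rangle\le w\big([a_{ij}]\big),$$
the last step because $u$ is a unit vector; taking the supremum over $x$ yields the theorem.

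The substance is the pair estimate, which I would prove by squaring. Set $\alpha=|\langle Pz,y\rangle|$ and $\beta=|\langle Qy,z\rangle|$. Cauchy--Schwarz gives $\alpha^2\le\|Pz\|^2\le\|P\|^2$ and $\beta^2\le\|Qy\|^2\le\|Q\|^2$, so it remains to bound the cross term $2\alpha\beta$ by $\|P\|\,\|Q\|+w(QP)$. The key trick is to recognise, for fixed $z$,
$$\langle Pz,y\rangle\,\langle Qy,z\rangle=\overline{\langle y,Pz\rangle}\,\langle y,Q^*z\rangle=\big\langle\big((Pz)\otimes(Q^*z)\big)y,\,y\big\rangle,$$
where $f\otimes g$ denotes the rank-one operator $v\mapsto\langle v,g\rangle f$. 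Hence $\alpha\beta=\big|\langle Pz,y\rangle\,\langle Qy,z\rangle\big|\le w\big((Pz)\otimes(Q^*z)\big)$, and by the classical identity $w(f\otimes g)=\tfrac12\big(|\langle f,g\rangle|+\|f\|\,\|g\|\big)$, together with $\langle Pz,Q^*z\rangle=\langle QPz,z\rangle$ and $\|Q^*z\|\le\|Q\|$, we get $\alpha\beta\le\tfrac12\big(w(QP)+\|P\|\,\|Q\|\big)$, which is exactly what is needed.

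The main obstacle is precisely this rank-one realization of $\langle Pz,y\rangle\langle Qy,z\rangle$ coupled with the formula for $w(f\otimes g)$; once it is found, the rest — the triangle-inequality split, the normalization of $x_i,x_j$, and the passage to the quadratic form $\langle[a_{ij}]u,u\rangle$ — is bookkeeping. A minor point worth recording is that this argument does not use the symmetrization identity \eqref{p9}; it could, however, be invoked afterwards to rewrite $w([a_{ij}])$ in the symmetrized form analogous to \eqref{p8}, and to see directly that $a_{ij}\le\|A_{ij}\|+\|A_{ji}\|$ (equivalently $w(A_{ji}A_{ij})\le2\|A_{ij}\|\,\|A_{ji}\|$), confirming that Theorem \ref{thm2} indeed refines \eqref{p8}.
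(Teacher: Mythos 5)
Your proof is correct and follows essentially the same route as the paper: the same triangle-inequality split into diagonal and off-diagonal pairs, the same reduction to the quadratic form $\langle [a_{ij}]u,u\rangle$, and the same key pair estimate (the paper's Lemma \ref{lemma4}). The only cosmetic difference is that you bound the cross term via the rank-one identity $w(f\otimes g)=\tfrac12(|\langle f,g\rangle|+\|f\|\,\|g\|)$, whereas the paper invokes Buzano's inequality directly --- these are the same fact in different clothing.
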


\begin{remark}\label{rem1}
Here we show that the bound in Theorem \ref{thm2} is at most the bound in \eqref{p8}.  
The relation \eqref{p9} and the spectral radius monotonicity of matrices with non-negative entries imply (see \cite[p. 491]{Horn2}) that if $A=\begin{bmatrix}
	a_{ij}
\end{bmatrix}$, $B=\begin{bmatrix}
b_{ij}
\end{bmatrix}\in \mathcal{M}_n(\mathbb{C}),$ then
\begin{eqnarray}\label{imp}
    w(A)\leq w(B) \quad \textit{whenever\, $0\leq a_{ij}\leq b_{ij}$ for all $i,j.$}
\end{eqnarray}

Clearly  $\sqrt{ \left( \|A_{ij}\|+\|A_{ji}\| \right)^2- \left(\|A_{ij}\| \|A_{ji}\|-w(A_{ji}A_{ij}) \right)} \leq \|A_{ij}\|+\|A_{ji}\|$ for all $i<j$, Hence,
 Theorem \ref{thm2} refines Abu-Omar and Kittaneh's bound \eqref{p8} (via the inequality \eqref{imp}).
\end{remark}

We now move to our next motivation, i.e., the study of Berezin radius inequalities for reproducing kernel Hilbert space operators.
Let $\Omega$ be a non-empty set. A reproducing kernel Hilbert space (RKHS in short) $\mathcal{H}(\Omega)$ is a Hilbert space of complex valued functions on the set $\Omega,$ where point evaluations are continuous (i.e., for each $\lambda \in \Omega,$ the map $E_{\lambda} : \mathcal{H}(\Omega) \to \mathbb{C}$ defined by  $E_{\lambda}(f)=f(\lambda)$ is a bounded linear functional on $\mathcal{H}(\Omega)$. By the Riesz representation theorem for each $\lambda \in \Omega,$ there exists a unique $k_{\lambda} \in \mathcal{H}(\Omega)$ such that $E_{\lambda}(f)=\langle f,k_{\lambda} \rangle$ for all $f \in \mathcal{H}(\Omega).$ The collection of functions  $\{k_\lambda :  \lambda \in \Omega \}$ is the set of all reproducing kernels of $\mathcal{H}(\Omega)$ and $\{\hat{k}_{\lambda}=\frac{k_\lambda}{\|k_\lambda\|} :  \lambda \in \Omega\}$ is the set of all normalized reproducing kernels of $\mathcal{H}(\Omega)$.  For  $A \in  \mathcal{B}(\mathcal{H}(\Omega)),$ the function $\widetilde{A}$, defined on $\Omega$ as $\widetilde{A}(\lambda)=\langle A\hat{k}_{\lambda},\hat{k}_{\lambda} \rangle$,  is called the Berezin symbol (see \cite{BER1974} and \cite{BER1972}) of $A$. We recall that the Berezin set and Berezin radius (or Berezin number) of $A$ are denoted by $\textbf{Ber}(A)$ and $\textbf{ber}(A)$, respectively, and defined (see \cite{KAR_JFA_2006}) as
\[\textbf{Ber}(A):=\left\{\widetilde{A}(\lambda) : \lambda \in \Omega\right\}
 \,\,\text{ and }\,\,
	\textbf{ber}(A):=\sup\left\{|\widetilde{A}(\lambda)| : \lambda \in \Omega\right\}.\] 
   It is clear that $\textbf{Ber}(A) \subseteq W(A),$ so $\textbf{ber}(A)\leq w(A)$. 
Note that $\textbf{ber}(\cdot) : \mathcal{B}(\mathcal{H}(\Omega))\to\mathbb{R}$ does not define a norm, in general. If $\mathcal{H}(\Omega)$ has the ``Ber" property (i.e., for any two operators $A,B \in \mathcal{B}\left(\mathcal{H} (\Omega)\right)$,   $\widetilde{A}\left(
\lambda\right)=\widetilde{B}\left(\lambda\right)$ for all $\lambda \in \Omega$ implies $A=B$), then $\textbf{ber}(\cdot)$ defines a norm.
The Berezin symbol is useful in studying reproducing kernel Hilbert space operators and it has wide application in operator theory. It has been studied in detail for Toeplitz and Hankel operators on Hardy and Bergman spaces. For further information about the Berezin symbol and its applications, we refer the reader to \cite{Bhunia_G}, \cite{K2013}, \cite{KAR_JFA_2006} and \cite{KS_CVTA_2005}. Recently, many mathematicians have studied the
Berezin number inequalities for reproducing kernel Hilbert space operators. 

The Berezin number for operator matrices in $\mathcal{B}(\oplus^n_{i=1}\mathcal{H}(\Omega_i))$ has also been well studied (see e.g. \cite{Bak_CMJ_2018}, \cite{Somdatta}, \cite{SDR_ACTA_2021}), where 
 $\Omega_1,\Omega_2,\ldots,\Omega_n$ are non-empty sets and  $\oplus^n_{i=1}\mathcal{H}(\Omega_i)$ is also a reproducing kernel Hilbert space on $\Omega_1\times \Omega_2\times \cdots \times \Omega_n.$ 
For ${\bf A}=[A_{ij}]$, where $A_{ij}\in \mathcal{B}(\mathcal{H}(\Omega_j), \mathcal{H}(\Omega_i))$, Bakherad showed in \cite[Theorem 2.1]{Bak_CMJ_2018} that
\begin{eqnarray}\label{p-b}
	\textbf{ber}({\bf A}) \leq 	w\left(\begin{bmatrix}
a_{ij}
    \end{bmatrix}_{i,j}^n\right), \textit{ where } a_{ij}=\begin{cases}
        \textbf{ber}(A_{ii})& \textit{ if } i=j,\\
        \|A_{ij}\| & \textit{ if } i\neq j.
    \end{cases}
\end{eqnarray}

Similar to Theorem \ref{thm2}, we can obtain an improvement of \eqref{p-b}. Before presenting this, here we recall, the Berezin norm of $A_{ij}\in \mathcal{B}(\mathcal{H}(\Omega_j), \mathcal{H}(\Omega_i))$ is defined as 
$\|A_{ij}\|_{ber}:=\sup \left\{ \| A_{ij}\hat{k}_{\lambda}\| : \lambda \in \Omega_j\right\}.$
Clearly, $\|A_{ij}\|_{ber}\leq \|A_{ij}\|.$
We prove that
\begin{theorem}\label{thm-ber}
   Let $\mathcal{H}(\Omega_1), \mathcal{H}(\Omega_2), \ldots, \mathcal{H}(\Omega_n)$ be reproducing kernel Hilbert spaces, and let ${\bf A}=[A_{ij}]$ be an $n\times n$ operator matrix with $A_{ij}\in \mathcal{B}(\mathcal{H}(\Omega_j), \mathcal{H}(\Omega_i)).$ Then $\textbf{ber}({\bf A}) \leq 	w\left(\begin{bmatrix}
		a_{ij} 
	\end{bmatrix}_{i,j=1}^n  \right), \textit{ where }$
   \begin{eqnarray*}
       a_{ij}=\begin{cases}
     \textbf{ber}(A_{ii})& \textit{ if } i=j,\\
	    \sqrt{ \left( \|A_{ij}\|_{ber}+\|A_{ji}^*\|_{ber} \right)^2- \left(\|A_{ij}\|_{ber} \|A_{ji}^*\|_{ber}-\textbf{ber}(A_{ji}A_{ij}) \right)} & \textit{ if } i<j,\\
        0 & \textit{ if } i>j.
	\end{cases}
       \end{eqnarray*} 
\end{theorem}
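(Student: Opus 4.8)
The plan is to bound $|\widetilde{{\bf A}}(\lambda)|$ for an arbitrary point $\lambda=(\lambda_1,\ldots,\lambda_n)\in\Omega_1\times\cdots\times\Omega_n$ and then take the supremum over $\lambda$. The reproducing kernel of $\oplus_{i=1}^{n}\mathcal{H}(\Omega_i)$ at $\lambda$ is $(k_{\lambda_1},\ldots,k_{\lambda_n})$, so its normalized version is $\hat{k}_\lambda=\big(\sum_{j}\|k_{\lambda_j}\|^2\big)^{-1/2}(k_{\lambda_1},\ldots,k_{\lambda_n})$; writing $s_i:=\|k_{\lambda_i}\|\big(\sum_j\|k_{\lambda_j}\|^2\big)^{-1/2}\ge0$, so that $\sum_i s_i^2=1$, one computes
\[
\widetilde{{\bf A}}(\lambda)=\langle {\bf A}\hat{k}_\lambda,\hat{k}_\lambda\rangle=\sum_{i,l=1}^{n}s_is_l\,\langle A_{il}\hat{k}_{\lambda_l},\hat{k}_{\lambda_i}\rangle ,
\]
and hence
\[
|\widetilde{{\bf A}}(\lambda)|\ \le\ \sum_{i}s_i^2\,|\langle A_{ii}\hat{k}_{\lambda_i},\hat{k}_{\lambda_i}\rangle|+\sum_{i<l}s_is_l\,\big|\langle A_{il}\hat{k}_{\lambda_l},\hat{k}_{\lambda_i}\rangle+\langle A_{li}\hat{k}_{\lambda_i},\hat{k}_{\lambda_l}\rangle\big| .
\]
The diagonal terms are at most $\textbf{ber}(A_{ii})\,s_i^2=a_{ii}s_i^2$, so the task reduces to bounding each off-diagonal ``cross term'' by $a_{il}$ and then dominating the resulting quadratic form in $(s_1,\ldots,s_n)$ by $w\big([a_{ij}]\big)$.

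For the cross terms the key point is the elementary inequality
\[
\big|\langle x,v\rangle+\overline{\langle y,v\rangle}\big|^{2}\ \le\ \|x\|^2+\|y\|^2+2\,|\langle x,y\rangle| ,
\]
valid for all vectors $x,y$ in a Hilbert space and every unit vector $v$. It suffices to treat the case where $v$ lies in $\mathrm{span}\{x,y\}$, a space of dimension at most two, and there a short computation in an orthonormal basis whose first vector is parallel to $x$ settles it (and shows it is sharp; the case $x=0$ is trivial). Apply this with $v=\hat{k}_{\lambda_i}$, $x=A_{il}\hat{k}_{\lambda_l}$, $y=A_{li}^{*}\hat{k}_{\lambda_l}$: then $\langle x,v\rangle=\langle A_{il}\hat{k}_{\lambda_l},\hat{k}_{\lambda_i}\rangle$, $\overline{\langle y,v\rangle}=\langle A_{li}\hat{k}_{\lambda_i},\hat{k}_{\lambda_l}\rangle$, while $\|x\|\le\|A_{il}\|_{ber}$, $\|y\|\le\|A_{li}^{*}\|_{ber}$, and $\langle x,y\rangle=\langle A_{li}A_{il}\hat{k}_{\lambda_l},\hat{k}_{\lambda_l}\rangle$. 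Now split $2|\langle x,y\rangle|\le|\langle x,y\rangle|+\|x\|\|y\|$ and use $|\langle x,y\rangle|\le\textbf{ber}(A_{li}A_{il})$ for the first copy and $\|x\|\|y\|\le\|A_{il}\|_{ber}\|A_{li}^{*}\|_{ber}$ for the second; the inequality above then collapses exactly to
\[
\big|\langle A_{il}\hat{k}_{\lambda_l},\hat{k}_{\lambda_i}\rangle+\langle A_{li}\hat{k}_{\lambda_i},\hat{k}_{\lambda_l}\rangle\big|^{2}\le\big(\|A_{il}\|_{ber}+\|A_{li}^{*}\|_{ber}\big)^{2}-\big(\|A_{il}\|_{ber}\|A_{li}^{*}\|_{ber}-\textbf{ber}(A_{li}A_{il})\big)=a_{il}^{2}
\]
for $i<l$, i.e.\ the cross term is at most $a_{il}$.

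Putting the pieces together, $|\widetilde{{\bf A}}(\lambda)|\le\sum_i a_{ii}s_i^2+\sum_{i<l}a_{il}s_is_l=\tfrac12\big\langle[a_{ij}+a_{ji}]_{i,j=1}^{n}\,s,s\big\rangle$ with $s=(s_1,\ldots,s_n)$. Since $[a_{ij}+a_{ji}]$ is symmetric with non-negative entries, $\big\langle[a_{ij}+a_{ji}]\,s,s\big\rangle\le\lambda_{\max}\big([a_{ij}+a_{ji}]\big)=r\big([a_{ij}+a_{ji}]\big)$ (the last step by Perron--Frobenius), and $\tfrac12 r\big([a_{ij}+a_{ji}]\big)=w\big([a_{ij}]\big)$ by \eqref{p9} applied to the non-negative matrix $[a_{ij}]$. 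Therefore $|\widetilde{{\bf A}}(\lambda)|\le w\big([a_{ij}]\big)$ for every $\lambda$, and taking the supremum over $\lambda$ gives $\textbf{ber}({\bf A})\le w\big([a_{ij}]\big)$.

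I expect the cross-term estimate to be the main obstacle. In the operator-norm case (Theorem \ref{thm2}) one may route the cross term through the numerical radius of the block $\begin{bmatrix}0&A_{il}\\A_{li}&0\end{bmatrix}$, but this is unavailable here, because $(\hat{k}_{\lambda_i},\hat{k}_{\lambda_l})$ is a unit vector of $\mathcal{H}(\Omega_i)\oplus\mathcal{H}(\Omega_l)$ that is in general \emph{not} a scalar multiple of the reproducing kernel $(k_{\lambda_i},k_{\lambda_l})$ there, so the Berezin number of the block cannot legitimately be substituted. One is forced to argue directly with the three scalar quantities $\|A_{il}\hat{k}_{\lambda_l}\|$, $\|A_{li}^{*}\hat{k}_{\lambda_l}\|$ and $\langle A_{li}A_{il}\hat{k}_{\lambda_l},\hat{k}_{\lambda_l}\rangle$, each of which is genuinely controlled by a Berezin norm or Berezin radius; this is precisely what the elementary inequality above does, and producing the sharp constant $a_{il}$ then reduces to the splitting $2|\langle x,y\rangle|\le|\langle x,y\rangle|+\|x\|\|y\|$.
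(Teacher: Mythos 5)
Your proof is correct, and its skeleton --- expand $\langle {\bf A}\hat{k}_\lambda,\hat{k}_\lambda\rangle$, bound the diagonal terms by $\textbf{ber}(A_{ii})$, bound each off-diagonal pair by $a_{il}$, and dominate the resulting non-negative quadratic form in the weights by $w([a_{ij}])$ via \eqref{p9} --- is exactly the paper's. Where you genuinely diverge is in the cross-term lemma. The paper's Lemma \ref{lem-b} (proved "as in" Lemma \ref{lemma4}) bounds the \emph{sum of moduli} $|\langle A_{il}\hat{k}_{\lambda_l},\hat{k}_{\lambda_i}\rangle|+|\langle A_{li}\hat{k}_{\lambda_i},\hat{k}_{\lambda_l}\rangle|$ by squaring and applying Buzano's inequality (Lemma \ref{lem3}) to the product term, $2|\langle x,v\rangle\langle v,y\rangle|\le \|x\|\|y\|+|\langle x,y\rangle|$, which lands directly on $\|x\|^2+\|y\|^2+\|x\|\|y\|+|\langle x,y\rangle|=a_{il}^2$. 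You instead bound the \emph{modulus of the sum} by the self-contained two-dimensional estimate $|\langle x,v\rangle+\langle v,y\rangle|^2\le\|x\|^2+\|y\|^2+2|\langle x,y\rangle|$; this is a valid inequality (your reduction is legitimate: the left side depends only on the orthogonal projection of $v$ onto $\mathrm{span}\{x,y\}$ and scales linearly when that projection is renormalized to a unit vector, after which the basis computation gives $|v_1|(\|x\|+|\alpha|)+|v_2||\beta|\le\sqrt{(\|x\|+|\alpha|)^2+|\beta|^2}$), and it is in fact \emph{sharper} than the Buzano-based bound whenever $|\langle x,y\rangle|<\|x\|\|y\|$ --- it would support the smaller off-diagonal entries $\sqrt{\|A_{il}\|_{ber}^2+\|A_{li}^*\|_{ber}^2+2\,\textbf{ber}(A_{li}A_{il})}$ --- so your final splitting $2|\langle x,y\rangle|\le|\langle x,y\rangle|+\|x\|\|y\|$ is a deliberate weakening to recover exactly the stated $a_{il}$. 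Two further points in your favour: your remark that the $2\times 2$-block shortcut is unavailable because $(\hat{k}_{\lambda_i},\hat{k}_{\lambda_l})$ is not a normalized kernel of the direct sum is accurate, and your explicit normalization through the weights $s_i$ is cleaner than the paper's, which writes the normalized kernel of $\oplus_i\mathcal{H}(\Omega_i)$ without the normalizing factor.
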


\begin{remark}
    It is easy to show that $a_{ij} \leq \|A_{ij}\|_{ber}+ \|A_{ji}^*\|_{ber} \leq \|A_{ij}\|+ \|A_{ji}\|$ for all $i<j.$ Thus, via the relations \eqref{p9} and \eqref{imp}, we remark that 
      Theorem \ref{thm-ber} refines Bakherad's inequality \eqref{p-b}.
\end{remark}

We now come to our next motivation in this work: as an application of the improved numerical radius estimates, we study spectral radius inequalities for the sums, products and commutators of bounded linear operators in $\mathcal{B}(\mathcal{H})$, which improve and generalize the spectral radius inequalities in \cite{Abu_Stu_2013} and \cite{Kittaneh-AMS}. In \cite[Theorem 1]{Kittaneh-AMS}, Kittaneh showed that if $A_1,A_2,B_1,B_2\in \mathcal{B}(\mathcal{H}),$ then 
    \begin{eqnarray}\label{AMS}
        r(A_1B_1+A_2B_2) &\leq&   \frac{ 1 }{2 } \left( \|B_1A_1\|+ \|B_2A_2\| \right) \notag\\
        && +\frac{ 1 }{2 } \sqrt{ \left( \|B_1A_1\|-\|B_2A_2\|\right)^2 + 4\|B_1A_2\|\|B_2A_1\|}.
    \end{eqnarray}
Further, Abu-Omar and Kittaneh \cite[Theorem 2.2]{Abu_Stu_2013} obtained an improvement of \eqref{AMS}, namely,
    \begin{eqnarray}\label{Stud}
        r(A_1B_1+A_2B_2) &\leq&   \frac{ 1 }{2 } \left(w(B_1A_1)+w(B_2A_2) \right) \notag\\
        && +\frac{ 1 }{2 } \sqrt{ \left(w(B_1A_1)-w(B_2A_2)\right)^2 + 4\|B_1A_2\|\|B_2A_1\|}.
    \end{eqnarray}
In this work, by applying Theorem \ref{thm2}, we can obtain a considerable refinement of \eqref{Stud} with also allowing $A_i \in \mathcal{B}(\mathcal{H}_i, \mathcal{H}_1) $ and $B_i\in \mathcal{B}(\mathcal{H}_1, \mathcal{H}_i)$ for all $i=1,2$ and 
 $\mathcal{H}_1, \mathcal{H}_2$ be arbitrary Hilbert spaces.

\begin{cor}\label{cor-s1}
    Let $\mathcal{H}_1, \mathcal{H}_2$ be complex Hilbert spaces, and let $A_i \in \mathcal{B}(\mathcal{H}_i, \mathcal{H}_1) $ and $B_i\in \mathcal{B}(\mathcal{H}_1, \mathcal{H}_i)$ for all $i=1,2.$
    Then
    \begin{eqnarray*}
        r(A_1B_1+A_2B_2) &\leq &  \frac{ 1 }{2 } \left(w(B_1A_1)+w(B_2A_2) \right) \\
        && +  \frac{ 1 }{2 }  \sqrt{ \left(w(B_1A_1)-w(B_2A_2)\right)^2 + 3\|B_1A_2\|\|B_2A_1\| + w(B_2A_1 B_1A_2)}.
    \end{eqnarray*}
\end{cor}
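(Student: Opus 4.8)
\textbf{Proof proposal for Corollary \ref{cor-s1}.}

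The plan is to reduce the spectral radius of the sum $A_1B_1 + A_2B_2$ to the numerical radius of a suitable $2\times 2$ operator matrix, and then apply Theorem \ref{thm2}. First I would recall the standard identity relating products: for operators $X \colon \mathcal{K} \to \mathcal{H}$ and $Y \colon \mathcal{H} \to \mathcal{K}$ one has $\sigma(XY) \cup \{0\} = \sigma(YX) \cup \{0\}$, hence $r(XY) = r(YX)$. The natural choice here is to pair $X = \begin{bmatrix} A_1 & A_2 \end{bmatrix} \colon \mathcal{H}_1 \oplus \mathcal{H}_2 \to \mathcal{H}_1$ (a $1\times 2$ block) with $Y = \begin{bmatrix} B_1 \\ B_2 \end{bmatrix} \colon \mathcal{H}_1 \to \mathcal{H}_1 \oplus \mathcal{H}_2$, so that $XY = A_1B_1 + A_2B_2$ acting on $\mathcal{H}_1$, while $YX = \begin{bmatrix} B_1A_1 & B_1A_2 \\ B_2A_1 & B_2A_2 \end{bmatrix}$ is a $2\times 2$ operator matrix on $\mathcal{H}_1 \oplus \mathcal{H}_1$. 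Thus $r(A_1B_1+A_2B_2) = r(YX) \le w(YX)$, using the elementary bound $r(\cdot) \le w(\cdot)$.

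Next I would apply Theorem \ref{thm2} to the $2\times 2$ operator matrix ${\bf A} = YX$ with $A_{11} = B_1A_1$, $A_{12} = B_1A_2$, $A_{21} = B_2A_1$, $A_{22} = B_2A_2$. Theorem \ref{thm2} gives $w(YX) \le w\!\left(\begin{bmatrix} w(B_1A_1) & a_{12} \\ 0 & w(B_2A_2) \end{bmatrix}\right)$, where
\[
a_{12} = \sqrt{ \bigl(\|B_1A_2\| + \|B_2A_1\|\bigr)^2 - \bigl(\|B_1A_2\|\,\|B_2A_1\| - w(B_2A_1\,B_1A_2)\bigr)}.
\]
Expanding the square gives $a_{12}^2 = \|B_1A_2\|^2 + \|B_2A_1\|^2 + \|B_1A_2\|\,\|B_2A_1\| + w(B_2A_1B_1A_2)$. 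It remains to compute the numerical radius of the concrete $2\times 2$ upper-triangular scalar-entry matrix $\begin{bmatrix} \alpha & \gamma \\ 0 & \beta \end{bmatrix}$ with $\alpha = w(B_1A_1) \ge 0$, $\beta = w(B_2A_2) \ge 0$, $\gamma = a_{12} \ge 0$. The closed-form formula for this (obtainable from \eqref{p9} applied to the Hermitian part, or from a direct computation) is
\[
w\!\left(\begin{bmatrix} \alpha & \gamma \\ 0 & \beta \end{bmatrix}\right) = \frac{\alpha+\beta}{2} + \frac{1}{2}\sqrt{(\alpha-\beta)^2 + \gamma^2}.
\]
Substituting $\gamma^2 = a_{12}^2 = \|B_1A_2\|^2 + \|B_2A_1\|^2 + \|B_1A_2\|\|B_2A_1\| + w(B_2A_1B_1A_2)$ and then bounding $\|B_1A_2\|^2 + \|B_2A_1\|^2 \le (\|B_1A_2\| + \|B_2A_1\|)^2$ — or rather, keeping $\|B_1A_2\|^2 + \|B_2A_1\|^2 \ge 2\|B_1A_2\|\|B_2A_1\|$ in mind and using $\|B_1A_2\|^2+\|B_2A_1\|^2 \le$ something — wait: one wants an upper bound, so I would instead note $\|B_1A_2\|^2 + \|B_2A_1\|^2 + \|B_1A_2\|\|B_2A_1\| \le 3\|B_1A_2\|\|B_2A_1\|$ fails in general. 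Let me reconsider: actually $\|B_1A_2\|^2+\|B_2A_1\|^2 \ge 2\|B_1A_2\|\|B_2A_1\|$, so $a_{12}^2 \ge 3\|B_1A_2\|\|B_2A_1\| + w(B_2A_1B_1A_2)$, which goes the wrong way. The resolution must be that one does \emph{not} expand $(\|A_{12}\|+\|A_{21}\|)^2$ but rather bounds $\|B_1A_2\|\cdot\|B_2A_1\|$ by the AM–GM-friendly form; concretely the claimed bound has $3\|B_1A_2\|\|B_2A_1\|$ under the root, which equals $a_{12}^2$ precisely when $\|B_1A_2\| = \|B_2A_1\|$, and in general $a_{12}^2 \le (\|B_1A_2\|+\|B_2A_1\|)^2 - \|B_1A_2\|\|B_2A_1\| + w(B_2A_1B_1A_2)$; then one uses $(s+t)^2 - st \le$ ... hmm, $(s+t)^2 - st = s^2+st+t^2$, and I need this $\le 3st$, i.e. $s^2 - 2st + t^2 \le 0$, i.e. $(s-t)^2 \le 0$ — only equality. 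So the honest statement is that the Corollary as displayed with $3\|B_1A_2\|\|B_2A_1\|$ is \emph{weaker} than what Theorem \ref{thm2} yields, presumably stated in that form only to match and compare cleanly with \eqref{Stud}; the actual proof delivers the sharper $s^2+st+t^2+\eta$ under the root, and one then simply drops to $(s+t)^2 - st + \eta$ is false too — so the intended reading is that $3st + \eta \le (s+t)^2 - (st - \eta)$ iff $st \le (s-t)^2 + st$... I will present the clean chain: apply Theorem \ref{thm2}, use the explicit $w$ of the $2\times2$ triangular matrix, and then replace $\|B_1A_2\|^2+\|B_2A_1\|^2+\|B_1A_2\|\|B_2A_1\|$ by noting that in the final bound it suffices to have ANY valid upper estimate, and the displayed form follows from $\|B_1A_2\|^2 + \|B_2A_1\|^2 \le (\|B_1A_2\| + \|B_2A_1\|)^2$ combined with...

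I expect the main obstacle — and the point needing the most care — to be exactly this final algebraic manipulation reconciling $a_{12}^2$ with the expression $3\|B_1A_2\|\|B_2A_1\| + w(B_2A_1B_1A_2)$ appearing under the square root in the statement: one must verify the correct direction of the inequality and confirm that the stated Corollary is a (possibly slightly lossy) consequence of the sharp bound coming from Theorem \ref{thm2}, rather than equivalent to it. Everything else — the $r(XY) = r(YX)$ reduction, the bound $r \le w$, and the closed form for the numerical radius of a $2\times 2$ upper-triangular matrix via \eqref{p9} — is routine. I would also remark that taking $\mathcal{H}_1 = \mathcal{H}_2 = \mathcal{H}$ and comparing term-by-term with \eqref{Stud} (where the analogous quantity under the root is $4\|B_1A_2\|\|B_2A_1\|$, versus $3\|B_1A_2\|\|B_2A_1\| + w(B_2A_1B_1A_2) \le 3\|B_1A_2\|\|B_2A_1\| + \|B_2A_1B_1A_2\| \le 3\|B_1A_2\|\|B_2A_1\| + \|B_1A_2\|\|B_2A_1\| = 4\|B_1A_2\|\|B_2A_1\|$) shows the refinement, using $w(\cdot) \le \|\cdot\|$ and submultiplicativity of the operator norm.
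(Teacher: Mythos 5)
Your setup is exactly the paper's: the identity $r(XY)=r(YX)$ with $X=\begin{bmatrix}A_1 & A_2\end{bmatrix}$ and $Y=\begin{bmatrix}B_1\\ B_2\end{bmatrix}$ (this is Proposition~\ref{th3} for $n=2$), followed by Theorem~\ref{thm2} and the closed form for $w$ of a $2\times2$ upper-triangular nonnegative matrix via \eqref{p9}. But you then hit the algebraic wall you describe at length and never get past it, and your tentative resolutions are wrong in both directions: the Corollary is \emph{not} a lossy weakening of what Theorem~\ref{thm2} directly yields, it is \emph{stronger}, since with $s=\|B_1A_2\|$, $t=\|B_2A_1\|$, $\eta=w(B_2A_1B_1A_2)$ one has $3st+\eta\le s^2+st+t^2+\eta=a_{12}^2$, with equality only when $s=t$. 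So the direct chain you propose proves only the weaker estimate with $s^2+st+t^2+\eta$ under the root, and no amount of term-by-term massaging of that single inequality will produce the stated $3st+\eta$.

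The missing idea is a homogeneity (rescaling) argument. Replace $A_1$ by $\tau A_1$ and $B_1$ by $\tau^{-1}B_1$ for $\tau>0$: this leaves $A_1B_1$, $w(B_1A_1)$, $w(B_2A_2)$ and $w(B_2A_1B_1A_2)$ unchanged, while $\|B_1A_2\|\mapsto \tau^{-1}s$ and $\|B_2A_1\|\mapsto \tau t$, so their product $st$ is also unchanged. Applying your chain to the rescaled operators gives, under the square root, $(w(B_1A_1)-w(B_2A_2))^2+(\tau^{-1}s+\tau t)^2-st+\eta$. Minimizing over $\tau>0$ turns $(\tau^{-1}s+\tau t)^2$ into $4st$ (AM--GM), and $4st-st+\eta=3st+\eta$, which is exactly the stated bound. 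This is precisely what the paper does ("replacing $A_1$ and $B_1$ by $tA_1$ and $\frac1t B_1$ and taking the infimum over $t>0$"), and it is the same device used by Kittaneh and by Abu-Omar--Kittaneh to get the $4\|B_1A_2\|\|B_2A_1\|$ term in \eqref{AMS} and \eqref{Stud} from a symmetric starting point. Everything else in your proposal (the $r(XY)=r(YX)$ reduction, $r\le w$, the eigenvalue computation for the $2\times2$ scalar matrix, and the final comparison with \eqref{Stud}) is correct.
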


\begin{remark}
Corollary \ref{cor-s1} refines \eqref{Stud} (as $ w(B_2A_1 B_1A_2)\leq \|B_1A_2\|\|B_2A_1\|$), so also refines \eqref{AMS}.
\end{remark}

A second application of Theorem \ref{thm2} is to provide a refined estimation for the zeros of a complex polynomial 
$p(z)=z^n+a_{n}z^{n-1}+\ldots+a_2z+a_1,$ $n \geq 2,$
where $a_1, a_2, \ldots, a_{n}$ are complex numbers and $a_1\neq 0$. 
We show that 
\begin{theorem} \label{est-poly}
If $\lambda$ is a zero of $p(z),$ then
	\begin{eqnarray*}  
   |\lambda| &\leq& \frac12 \left(|a_n|+\cos \frac{\pi}{n}+ \sqrt{\left(|a_n|-\cos \frac{\pi}{n} \right)^2+\left(1+\sqrt{\sum_{j=1}^{n-1}|a_j|^2} \right)^2-\alpha } \right),
	\end{eqnarray*}
    where $\alpha=\sqrt{\sum_{j=1}^{n-1}|a_j|^2}- \frac{1}{2}\left(|a_{n-1}|+\sqrt{\sum_{j=1}^{n-1}|a_j|^2} \right).$
\end{theorem}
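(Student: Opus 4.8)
The plan is to realize the zeros of $p$ as the eigenvalues of its companion matrix and then estimate the numerical radius of that matrix via Theorem \ref{thm2} after splitting off the first coordinate. Concretely, I would start from the companion matrix
\[
C=\begin{bmatrix}
-a_n & -a_{n-1} & \cdots & -a_2 & -a_1\\
1 & 0 & \cdots & 0 & 0\\
0 & 1 & \cdots & 0 & 0\\
\vdots & \ddots & \ddots & & \vdots\\
0 & \cdots & 0 & 1 & 0
\end{bmatrix}\in\mathcal{M}_n(\mathbb{C}),
\]
whose characteristic polynomial is $p$ up to sign, so each zero $\lambda$ of $p$ satisfies $|\lambda|\le r(C)\le w(C)$. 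I would then view $C$ as a $2\times 2$ operator matrix $[A_{ij}]_{i,j=1}^{2}$ with $\mathcal{H}_1=\mathbb{C}$ and $\mathcal{H}_2=\mathbb{C}^{n-1}$: here $A_{11}=-a_n$, $A_{12}=\begin{bmatrix}-a_{n-1}&\cdots&-a_1\end{bmatrix}\in\mathcal{B}(\mathbb{C}^{n-1},\mathbb{C})$, $A_{21}=\begin{bmatrix}1&0&\cdots&0\end{bmatrix}^{T}\in\mathcal{B}(\mathbb{C},\mathbb{C}^{n-1})$, and $A_{22}$ is the nilpotent Jordan block of size $n-1$ (ones on the subdiagonal).

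Next I would compute the four quantities that Theorem \ref{thm2} requires for $n=2$ blocks. One has $w(A_{11})=|a_n|$; the numerical radius of the nilpotent Jordan block of size $n-1$ is the classical value $w(A_{22})=\cos\frac{\pi}{n}$; the operator norms are $\|A_{12}\|=\sqrt{\sum_{j=1}^{n-1}|a_j|^2}$ and $\|A_{21}\|=1$. Finally, $A_{21}A_{12}$ is the rank-one operator $z\mapsto (A_{12}z)\,e_1$ on $\mathbb{C}^{n-1}$, i.e.\ $A_{21}A_{12}=e_1v^{*}$ with $\|v\|=\sqrt{\sum_{j=1}^{n-1}|a_j|^2}$ and $|\langle e_1,v\rangle|=|a_{n-1}|$, so the standard rank-one identity $w(xv^{*})=\tfrac12\big(|\langle x,v\rangle|+\|x\|\,\|v\|\big)$ gives $w(A_{21}A_{12})=\tfrac12\big(|a_{n-1}|+\sqrt{\sum_{j=1}^{n-1}|a_j|^2}\big)$. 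Hence $\|A_{12}\|\,\|A_{21}\|-w(A_{21}A_{12})=\alpha$ with $\alpha$ exactly as in the statement, and the off-diagonal entry that Theorem \ref{thm2} produces is $a_{12}=\sqrt{\big(1+\sqrt{\sum_{j=1}^{n-1}|a_j|^2}\big)^2-\alpha}$.

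Then Theorem \ref{thm2} would give
\[
|\lambda|\le w(C)\le w\!\left(\begin{bmatrix} |a_n| & a_{12}\\ 0 & \cos\frac{\pi}{n}\end{bmatrix}\right),
\]
and I would finish by evaluating this $2\times 2$ scalar numerical radius through \eqref{p9}: for $p,q,r\ge 0$ one has $w\!\left(\begin{bmatrix}p&q\\0&r\end{bmatrix}\right)=\tfrac12\,r\!\left(\begin{bmatrix}2p&q\\q&2r\end{bmatrix}\right)=\tfrac12\big(p+r+\sqrt{(p-r)^2+q^2}\big)$; substituting $p=|a_n|$, $r=\cos\frac{\pi}{n}$, $q=a_{12}$ yields the asserted bound. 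The only non-routine ingredients are the identification of $\sigma(C)$ with the zero set of $p$, the value $\cos\frac{\pi}{n}$ for the numerical radius of the nilpotent Jordan block of size $n-1$, and the rank-one numerical radius identity; the computation I would check most carefully is that $\|A_{12}\|\,\|A_{21}\|-w(A_{21}A_{12})$ collapses to precisely the $\alpha$ of the statement, since that is what makes the final expression match the theorem verbatim.
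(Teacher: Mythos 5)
Your proposal is correct and follows essentially the same route as the paper: the paper likewise writes $C(p)$ as a $2\times 2$ operator matrix with blocks $-a_n$, the row $[-a_{n-1},\ldots,-a_1]$, the column $e_1$, and the nilpotent shift $L_{n-1}$, then applies Corollary \ref{cor1} (the $n=2$ case of Theorem \ref{thm2}) together with $w(L_{n-1})=\cos\frac{\pi}{n}$ and the Fujii--Kubo rank-one identity to get $\alpha$. Your explicit evaluation of the resulting $2\times 2$ scalar numerical radius via \eqref{p9} is exactly the computation hidden inside Corollary \ref{cor1}, so nothing is missing.
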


\begin{remark}
   Note that $\alpha >0$ for every non-zero polynomial  $p(z)$. Therefore, Theorem \ref{est-poly} refines 
\begin{equation}\label{abd}
|\lambda| \leq \frac12 \left(|a_n|+\cos \frac{\pi}{n}+ \sqrt{\left(|a_n|-\cos \frac{\pi}{n} \right)^2+\left(1+\sqrt{\sum_{j=1}^{n-1}|a_j|^2} \right)^2 } \right),
\end{equation}
given by Abdurakhmanov \cite{Abdurakhmanov} (also Abu-Omar and Kittaneh \cite{Kittaneh-AFA}).
\end{remark}


\subsection*{Organization of the paper} 
In Section \ref{Sec2}, we develop an inner-product inequality (involving two bounded linear operators), and using this we prove Theorem \ref{thm2} and deduce bounds for the numerical radius of a single bounded linear operator and the product of operators. Using Theorem \ref{thm2}, we also obtain an upper bound for the numerical radius of the Kronecker products $A\otimes B$, where $A\in \mathcal{M}_n(\mathbb{C})$ and $B\in \mathcal{B}(\mathcal{H}),$ which refines a bound in \cite{Khare}.
In Section \ref{S-ber}, similar to Theorem \ref{thm2}, we study Berezin number inequalities for reproducing kernel Hilbert space operators. We prove Theorem \ref{thm-ber} and deduce related results.
In Section \ref{Sec3}, as an application of our numerical radius inequalities for $n \times n$ operator matrices, we obtain several spectral radius inequalities for the sums, products and commutators of bounded linear operators (improving the results in \cite{Abu_Stu_2013}). In particular, we prove Corollary \ref{cor-s1}. 
In Section \ref{Zrors}, by applying the numerical radius inequalities for $2 \times 2$ operator matrices obtained here, we provide a bound for the numerical radius of Frobenius companion matrices.
In particular, we prove Theorem \ref{est-poly}.

\section{Numerical radius inequalities for $n\times n$ operator matrices}\label{Sec2}

In this section we obtain numerical radius inequalities for $n\times n$ operator matrices that strengthen \eqref{p8}, and then deduce an upper bound for the numerical radius of $2\times 2$ operator matrices that refines the bound in \cite[Theorem 2.1]{Paul_IJMMS_2012}. 
Begin by noting the well-known inner-product inequality \cite{Buzano} which is known as Buzano's inequality.



\begin{lemma}[\cite{Buzano}]\label{lem3}
    Let $x,y,z\in \mathcal{H}.$ Then
    $|\langle x,z\rangle \langle z,y\rangle| \leq \frac{1}{2}\left( \|x\|\|y\|+|\langle x,y\rangle|\right) \|z\|^2 .$
\end{lemma}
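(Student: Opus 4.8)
Since Lemma~\ref{lem3} is Buzano's classical inequality \cite{Buzano}, I would just record a self-contained proof. The plan is to first normalize: if $z=0$ the inequality is trivial, and for $z\neq 0$ both sides are homogeneous of degree $2$ in $z$, so replacing $z$ by $z/\|z\|$ reduces everything to the case $\|z\|=1$, where the claim becomes $|\langle x,z\rangle\langle z,y\rangle|\leq \tfrac12\big(\|x\|\|y\|+|\langle x,y\rangle|\big)$.

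Next I would split $x$ and $y$ into their components along and orthogonal to $z$: write $x=\alpha z+x'$ and $y=\beta z+y'$ with $\alpha=\langle x,z\rangle$, $\beta=\langle y,z\rangle$, and $x',y'\perp z$. Then $\|x\|^2=|\alpha|^2+\|x'\|^2$, $\|y\|^2=|\beta|^2+\|y'\|^2$, $\langle x,y\rangle=\alpha\overline{\beta}+\langle x',y'\rangle$, and $\langle x,z\rangle\langle z,y\rangle=\alpha\overline{\beta}$, so it suffices to prove $2|\alpha\overline{\beta}|\leq \|x\|\|y\|+|\langle x,y\rangle|$.

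The key step --- and really the only idea --- is to use Cauchy--Schwarz twice, in two different ways. On one hand, $|\langle x,y\rangle|\geq |\alpha\overline{\beta}|-|\langle x',y'\rangle|\geq |\alpha||\beta|-\|x'\|\|y'\|$ by the triangle inequality and Cauchy--Schwarz in $\mathcal{H}$. On the other hand, applying Cauchy--Schwarz in $\mathbb{R}^2$ to the vectors $(|\alpha|,\|x'\|)$ and $(|\beta|,\|y'\|)$ gives $\|x\|\|y\|=\sqrt{|\alpha|^2+\|x'\|^2}\,\sqrt{|\beta|^2+\|y'\|^2}\geq |\alpha||\beta|+\|x'\|\|y'\|$. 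Adding these two inequalities makes the $\pm\|x'\|\|y'\|$ terms cancel and yields $\|x\|\|y\|+|\langle x,y\rangle|\geq 2|\alpha||\beta|=2|\langle x,z\rangle\langle z,y\rangle|$, which is exactly what is needed. There is essentially no obstacle here; the only mild point to be careful about is passing to $z/\|z\|$ at the outset so that the orthogonal decomposition along $z$ is clean.

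For completeness I might also note the slicker operator-theoretic route: with $\|z\|=1$, the rank-one map $P=\langle\,\cdot\,,z\rangle z$ is the orthogonal projection onto $\mathbb{C}z$, so $2P-I$ is a self-adjoint unitary and hence $|\langle(2P-I)x,y\rangle|\leq\|x\|\|y\|$; since $\langle Px,y\rangle=\langle x,z\rangle\langle z,y\rangle$, this reads $|2\langle x,z\rangle\langle z,y\rangle-\langle x,y\rangle|\leq\|x\|\|y\|$, and the triangle inequality finishes it. Either argument establishes Lemma~\ref{lem3}.
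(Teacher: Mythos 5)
The paper states Lemma~\ref{lem3} as a classical result with a citation to \cite{Buzano} and gives no proof, so there is no in-paper argument to compare against. Both of your arguments are correct and complete: the orthogonal-decomposition proof (normalizing $z$, splitting $x,y$ along and orthogonal to $z$, and adding the two Cauchy--Schwarz estimates so the $\|x'\|\|y'\|$ terms cancel) checks out line by line, and the reflection argument via the self-adjoint unitary $2P-I$ is the standard slick route. Either one would serve as a self-contained proof of the lemma.
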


To prove our main result we need the following inner-product inequality (involving two bounded linear operators).

\begin{lemma}\label{lemma4}
Let $\mathcal{H}_1, \mathcal{H}_2$ be complex Hilbert spaces, and let
 $A\in \mathcal{B}(\mathcal{H}_1,\mathcal{H}_2)$ and $B\in \mathcal{B}(\mathcal{H}_2,\mathcal{H}_1)$. Then 
    $$ |\langle Ax,y\rangle|+|\langle By,x\rangle| \leq \sqrt{ \left( \|A\|+\|B\|\right)^2-\left(\|A\| \|B\|-w(BA) \right)} \, \|x\| \|y\| \quad \forall \, x\in \mathcal{H}_1, \, \forall \, y\in \mathcal{H}_2.$$
\end{lemma}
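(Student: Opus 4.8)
The plan is to reduce to unit vectors and then split $|\langle Ax,y\rangle|+|\langle By,x\rangle|$ into a ``diagonal'' part and a ``cross'' part, estimating the cross part by Buzano's inequality in a way that produces the term $w(BA)$. By homogeneity it suffices to prove the claim for $\|x\|=\|y\|=1$ (the cases $x=0$ or $y=0$ being trivial, and the general case following by rescaling). Set $a:=|\langle Ax,y\rangle|$ and $b:=|\langle By,x\rangle|$, and note that $b=|\langle y,B^{*}x\rangle|$. The goal becomes
\[
(a+b)^{2}\le \|A\|^{2}+\|B\|^{2}+\|A\|\,\|B\|+w(BA),
\]
since the right-hand side equals $(\|A\|+\|B\|)^{2}-\bigl(\|A\|\,\|B\|-w(BA)\bigr)$.

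For the diagonal contribution I would simply use Cauchy--Schwarz: $a=|\langle Ax,y\rangle|\le\|Ax\|\le\|A\|$ and $b=|\langle By,x\rangle|\le\|By\|\le\|B\|$, hence $a^{2}+b^{2}\le\|A\|^{2}+\|B\|^{2}$. The crucial step is the cross term $2ab$. Here I would write $ab=|\langle Ax,y\rangle\,\langle y,B^{*}x\rangle|$ and apply Buzano's inequality (Lemma \ref{lem3}) with the vectors $Ax$, $B^{*}x$ and $z=y$ (a unit vector), which gives
\[
ab=|\langle Ax,y\rangle\,\langle y,B^{*}x\rangle|\le\tfrac12\bigl(\|Ax\|\,\|B^{*}x\|+|\langle Ax,B^{*}x\rangle|\bigr).
\]
Now the key observation is $\langle Ax,B^{*}x\rangle=\langle BAx,x\rangle$, so $|\langle Ax,B^{*}x\rangle|\le w(BA)$; together with $\|Ax\|\le\|A\|$ and $\|B^{*}x\|\le\|B^{*}\|=\|B\|$ this yields $2ab\le\|A\|\,\|B\|+w(BA)$.

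Adding the diagonal and cross estimates gives $(a+b)^{2}=a^{2}+b^{2}+2ab\le\|A\|^{2}+\|B\|^{2}+\|A\|\,\|B\|+w(BA)$, which is exactly the desired bound; taking square roots (the right-hand side is nonnegative since $w(BA)\le\|BA\|\le\|A\|\,\|B\|$) and reintroducing the factor $\|x\|\,\|y\|$ finishes the proof. I do not anticipate a serious obstacle: the only genuinely clever move is choosing $z=y$ in Buzano's inequality and rewriting $\langle Ax,B^{*}x\rangle$ as $\langle BAx,x\rangle$ so that the numerical radius of $BA$ (rather than just $\|A\|\,\|B\|$) appears; everything else is routine. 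One point worth double-checking when writing the details is that the two estimates $a^{2}+b^{2}\le\|A\|^{2}+\|B\|^{2}$ and $2ab\le\|A\|\,\|B\|+w(BA)$ hold simultaneously and unconditionally, so that they may be summed without loss.
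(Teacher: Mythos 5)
Your proof is correct and follows essentially the same route as the paper: expand $(a+b)^2$, bound the squares by $\|A\|^2+\|B\|^2$ via Cauchy--Schwarz, and control the cross term with Buzano's inequality applied to $Ax$, $B^*x$, $z=y$, using $\langle Ax,B^*x\rangle=\langle BAx,x\rangle$ to produce $w(BA)$. No issues.
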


\begin{proof}
 Using Lemma \ref{lem3}, we get
    \begin{eqnarray*}
     (|\langle Ax,y\rangle|+|\langle By,x\rangle|)^2
        &\leq&  |\langle Ax,y\rangle|^2+|\langle By,x\rangle|^2+ (\|Ax\| \|B^*x\|+|\langle BAx,x\rangle|)\|y\|^2 \\
        &\leq&\left(  \|A\|^2+\|B\|^2+\|A\|\|B\|+ w(BA) \right) \|x\|^2 \|y\|^2,
    \end{eqnarray*}
    as desired.
\end{proof}

With this lemma in hand, we can now refine the Abu-Omar and Kittaneh's inequality \eqref{p8}:

\begin{proof}[Proof of Theorem ~\ref{thm2}]
 Let  $x=  \begin{pmatrix}
	x_1,
	x_2,
	\ldots,
	x_n
\end{pmatrix}^T $ be a unit length vector in $ \oplus_{i=1}^n\mathcal{H}_i $, that is, $  $ $\sum_{i=1}^n\|x_i\|^2=1$.
Then
\begin{eqnarray*}
	|\langle {\bf A}x,x\rangle|
    & = & \left| \sum_{i,j=1}^{n}  \langle A_{ij}x_j,x_i\rangle \right|\\
	& \leq &  \sum_{i,j=1}^{n}  \left| \langle A_{ij}x_j,x_i\rangle \right|
	= \sum_{j=1}^{n}  \left| \langle A_{jj}x_j,x_j\rangle \right|+\sum_{\underset{i\neq j}{i,j=1}}^{n}  \left| \langle A_{ij}x_j,x_i\rangle \right|\\
    &=& \sum_{j=1}^{n}  \left| \langle A_{jj}x_j,x_j\rangle \right|+\sum_{\underset{i< j}{i,j=1}}^{n}  (\left| \langle A_{ij}x_j,x_i\rangle \right|+\left| \langle A_{ji}x_i,x_j\rangle \right|)\\
&\leq & \sum_{j=1}^{n}  w( A_{jj}) \|x_j\|^2+ \sum_{\underset{i< j}{i,j=1}}^{n}  (\left| \langle A_{ij}x_j,x_i\rangle \right|+\left| \langle A_{ji}x_i,x_j\rangle \right|) \\
&\leq & \sum_{j=1}^{n}  w( A_{jj}) \|x_j\|^2+ \sum_{\underset{i< j}{i,j=1}}^{n} a_{ij}\|x_i\| \|x_j\| \quad (\textit{by Lemma \ref{lemma4}})\\
&=& \left \langle \begin{bmatrix}
    a_{ij}
\end{bmatrix}_{i,j=1}^n |x|, |x| \right \rangle,
\end{eqnarray*}
 where $|x|=\begin{pmatrix}
	\| x_1\|,
	 \|x_2\|,
	  \ldots,
	   \|x_n\|
\end{pmatrix}^T\in \mathbb{C}^n$ is a unit vector. This implies 
$|\langle {\bf A}x,x\rangle| \leq w\left(\begin{bmatrix}
    a_{ij}
\end{bmatrix}_{i,j=1}^n \right)$   for all $ x\in \oplus_{i=1}^n\mathcal{H}_i$ with $\|x\|=1,$ and so  $ w({\bf A})\leq w\left(\begin{bmatrix}
    a_{ij}
\end{bmatrix}_{i,j=1}^n \right).$
\end{proof}

From Theorem \ref{thm2}, we deduce an upper bound for the numerical radius of $2 \times 2$ operator matrices.
 Paul and Bag \cite{Paul_IJMMS_2012} (also Abu-Omar and Kittaneh \cite{Abu_LAA_2015}) have
 shown that if $\mathcal{H}_1, \mathcal{H}_2$ are Hilbert spaces, and  $A\in \mathcal{B}(\mathcal{H}_1), ~B\in \mathcal{B}(\mathcal{H}_2, \mathcal{H}_1),~ 
C\in \mathcal{B}(\mathcal{H}_1, \mathcal{H}_2)$ and  $D\in \mathcal{B}(\mathcal{H}_2),$ then
\begin{eqnarray}\label{pb}
        w\left( \begin{bmatrix}
            A & B\\
            C & D
        \end{bmatrix}\right)
        &\leq&  \frac{ 1 }{2 } \left(w(A)+w(D) \right)+  \frac{ 1 }{2 }\sqrt{ \left(w(A)-w(D)\right)^2 + \left(\|B\|+\|C\|\right)^2 }  .
    \end{eqnarray} 
Now Theorem \ref{thm2} yields the following corollary, which refines \eqref{pb}. 

\begin{cor}\label{cor1}
Let $\mathcal{H}_1, \mathcal{H}_2$ be complex Hilbert spaces, and let $A\in \mathcal{B}(\mathcal{H}_1),~ B\in \mathcal{B}(\mathcal{H}_2, \mathcal{H}_1),~ 
C\in \mathcal{B}(\mathcal{H}_1, \mathcal{H}_2)$ and  $D\in \mathcal{B}(\mathcal{H}_2).$ Then
\begin{eqnarray*}
        w\left( \begin{bmatrix}
            A & B\\
            C & D
        \end{bmatrix}\right)
        &\leq&  \frac{1}{2}  \left(w(A)+w(D)\right)\\
        && + \frac{ 1 }{2 } \sqrt{ \left(w(A)-w(D)\right)^2 + \left(\|B\|+\|C\|\right)^2-\left(\|B\|\|C\| -w(CB) \right) } .
    \end{eqnarray*} 
\end{cor}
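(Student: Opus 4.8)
The plan is to deduce Corollary~\ref{cor1} directly from Theorem~\ref{thm2} applied to the $2\times 2$ operator matrix $\mathbf{A}=\begin{bmatrix} A & B \\ C & D\end{bmatrix}$. Theorem~\ref{thm2} gives
\[
w\!\left(\begin{bmatrix} A & B \\ C & D\end{bmatrix}\right) \leq w\!\left(\begin{bmatrix} w(A) & s \\ 0 & w(D)\end{bmatrix}\right),
\]
where $s = \sqrt{(\|B\|+\|C\|)^2 - (\|B\|\|C\| - w(CB))}$, taking $A_{11}=A$, $A_{22}=D$, $A_{12}=B$, $A_{21}=C$, so that the off-diagonal entry of the dominating matrix is $\sqrt{(\|A_{12}\|+\|A_{21}\|)^2 - (\|A_{12}\|\|A_{21}\| - w(A_{21}A_{12}))}$ with $A_{21}A_{12}=CB$. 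So the whole task reduces to computing the numerical radius of the explicit $2\times 2$ scalar matrix $\begin{bmatrix} p & s \\ 0 & q\end{bmatrix}$ with $p=w(A)\geq 0$, $q=w(D)\geq 0$, $s\geq 0$.

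Next I would recall (or quickly verify) the standard formula for the numerical radius of a $2\times 2$ complex matrix $\begin{bmatrix} p & s \\ 0 & q\end{bmatrix}$. One clean route is the identity \eqref{p9}: since all entries of $\begin{bmatrix} p & s \\ 0 & q\end{bmatrix}$ are nonnegative, $w\!\left(\begin{bmatrix} p & s \\ 0 & q\end{bmatrix}\right) = \tfrac12 r\!\left(\begin{bmatrix} 2p & s \\ s & 2q\end{bmatrix}\right)$, and the spectral radius of the $2\times 2$ real symmetric matrix $\begin{bmatrix} 2p & s \\ s & 2q\end{bmatrix}$ is its largest eigenvalue,
\[
(p+q) + \sqrt{(p-q)^2 + s^2},
\]
obtained from the characteristic equation. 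Hence
\[
w\!\left(\begin{bmatrix} p & s \\ 0 & q\end{bmatrix}\right) = \tfrac12\left((p+q) + \sqrt{(p-q)^2 + s^2}\right).
\]
Substituting $p=w(A)$, $q=w(D)$, and $s^2 = (\|B\|+\|C\|)^2 - (\|B\|\|C\| - w(CB))$ yields exactly the claimed bound.

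The main (and essentially only) obstacle is the bookkeeping: making sure the off-diagonal entry produced by Theorem~\ref{thm2} matches $s$, in particular that $A_{21}A_{12} = CB$ rather than $BC$, and that $w(CB)$ is the quantity appearing. There is no real subtlety, since $w(CB)$ and $w(BC)$ coincide anyway when these make sense, but one should state the substitution carefully. I would also note in passing that this refines \eqref{pb}, since $s \leq \|B\|+\|C\|$ because $\|B\|\|C\| - w(CB) \geq \|B\|\|C\| - \|CB\| \geq \|B\|\|C\| - \|C\|\|B\| = 0$; combined with the monotonicity of $w(\cdot)$ on nonnegative matrices (the relation \eqref{imp}), this gives the comparison with \eqref{pb}. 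The proof is short: apply Theorem~\ref{thm2}, compute the $2\times 2$ numerical radius via \eqref{p9}, and simplify.
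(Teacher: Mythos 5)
Your proof is correct and follows essentially the same route as the paper: apply Theorem~\ref{thm2} with $n=2$ and then evaluate the numerical radius of the resulting nonnegative upper-triangular $2\times 2$ matrix via \eqref{p9}, whose largest symmetrized eigenvalue gives exactly the stated bound. One parenthetical remark is false---$w(CB)$ and $w(BC)$ need not coincide in general (only the spectral radii must agree)---but this does not affect your argument, since you correctly identify the relevant quantity as $w(A_{21}A_{12})=w(CB)$.
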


\begin{proof}
    Setting $n=2$, $A_{11}=A,$ $A_{12}=B$, $A_{21}=C$ and $A_{22}=D$ in Theorem \ref{thm2}, we get
    \begin{eqnarray*}
        w\left( \begin{bmatrix}
            A & B\\
            C & D
        \end{bmatrix}\right)
        \leq  w\left( \begin{bmatrix}
            w(A)& \beta \\
            0& w(D)
        \end{bmatrix}\right)= r\left( \begin{bmatrix}
            w(A)& \frac{1}{2}\beta \\
            \frac{1}{2}\beta & w(D)
        \end{bmatrix}\right) \quad (\textit{by \eqref{p9}}),
        \end{eqnarray*} 
        where $\beta=\sqrt{ \left( \|B\|+\|C\| \right)^2- \left(\|B\| \|C\|-w(CB) \right)}.$ 
\end{proof}

To deduce the numerical radius inequalities for a bounded linear operator and the sum of two operators, we need the following known results.

\begin{lemma}[\cite{Hiz2011}]\label{lem4}
    Let $\mathcal{H}_1, \mathcal{H}_2$ be complex Hilbert spaces, and let $ B\in \mathcal{B}(\mathcal{H}_2, \mathcal{H}_1)$ and $
C\in \mathcal{B}(\mathcal{H}_1, \mathcal{H}_2)$. Then
\begin{enumerate}
    \item $w\left( \begin{bmatrix}
        0&B\\
        C&0
    \end{bmatrix}\right)= \frac{1}{2} \sup_{\theta \in \mathbb{R}}\| e^{i\theta}B+ e^{-i\theta}C^*\|$.

    \item If $\mathcal{H}_1= \mathcal{H}_2$, then $w\left( \begin{bmatrix}
        0&B\\
        B&0
    \end{bmatrix}\right)= w(B)$.
\end{enumerate}
\end{lemma}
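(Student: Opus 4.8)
The plan is to deduce both parts from the classical variational formula
\[
w(T)=\sup_{\theta\in\mathbb{R}}\bigl\|\operatorname{Re}(e^{i\theta}T)\bigr\|,\qquad \operatorname{Re}(X):=\tfrac12(X+X^*),
\]
valid for every bounded operator $T$. (The inequality $\le$ follows by choosing, for each unit vector $x$, the phase $\theta$ making $e^{-i\theta}\langle Tx,x\rangle$ real and using $\langle \operatorname{Re}(e^{-i\theta}T)x,x\rangle\le\|\operatorname{Re}(e^{-i\theta}T)\|$; the reverse inequality follows from $\|\operatorname{Re}(e^{i\theta}T)\|=w(\operatorname{Re}(e^{i\theta}T))\le w(T)$, since a selfadjoint operator has equal norm and numerical radius.) Applying this to $T=\begin{bmatrix}0&B\\ C&0\end{bmatrix}\in\mathcal{B}(\mathcal{H}_1\oplus\mathcal{H}_2)$, whose adjoint is $T^*=\begin{bmatrix}0&C^*\\ B^*&0\end{bmatrix}$, reduces everything to computing $\|\operatorname{Re}(e^{i\theta}T)\|$.

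For that computation, first I would write
\[
\operatorname{Re}(e^{i\theta}T)=\frac12\begin{bmatrix}0& e^{i\theta}B+e^{-i\theta}C^*\\ e^{-i\theta}B^*+e^{i\theta}C& 0\end{bmatrix}=\frac12\begin{bmatrix}0&S_\theta\\ S_\theta^*&0\end{bmatrix},\qquad S_\theta:=e^{i\theta}B+e^{-i\theta}C^*,
\]
noting that $e^{-i\theta}B^*+e^{i\theta}C=S_\theta^*$. Then I would invoke the elementary identity $\left\|\begin{bmatrix}0&S\\ S^*&0\end{bmatrix}\right\|=\|S\|$: this operator is selfadjoint, so its norm is the square root of the norm of its square $\operatorname{diag}(SS^*,S^*S)$, and that norm equals $\max\{\|SS^*\|,\|S^*S\|\}=\|S\|^2$. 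Hence $\|\operatorname{Re}(e^{i\theta}T)\|=\tfrac12\|e^{i\theta}B+e^{-i\theta}C^*\|$, and taking the supremum over $\theta\in\mathbb{R}$ proves (1).

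For (2), I would simply specialize (1) to $\mathcal{H}_1=\mathcal{H}_2$ and $C=B$, getting
$w\!\left(\begin{bmatrix}0&B\\ B&0\end{bmatrix}\right)=\tfrac12\sup_{\theta}\|e^{i\theta}B+e^{-i\theta}B^*\|=\sup_{\theta}\|\operatorname{Re}(e^{i\theta}B)\|$, which equals $w(B)$ by the same variational formula applied now to $B$ itself.

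I do not anticipate a genuine obstacle: the argument rests only on the two standard facts highlighted above (the $\sup_\theta\|\operatorname{Re}(e^{i\theta}\,\cdot\,)\|$ representation of the numerical radius and the norm of an off-diagonal selfadjoint operator matrix). The only mild care needed is bookkeeping of which block maps between which spaces when forming $T^*$, so that $S_\theta^*$ really is the $(2,1)$ entry of $e^{i\theta}T+e^{-i\theta}T^*$; everything else is routine.
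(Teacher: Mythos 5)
Your proof is correct: the variational formula $w(T)=\sup_{\theta}\|\operatorname{Re}(e^{i\theta}T)\|$ together with the identity $\bigl\|\begin{smallmatrix}0&S\\ S^*&0\end{smallmatrix}\bigr\|=\|S\|$ gives exactly the stated equalities, and your bookkeeping of the blocks (so that $S_\theta^*$ is indeed the $(2,1)$ entry) is right. The paper does not prove this lemma itself but cites it from Hirzallah--Kittaneh--Shebrawi, where the argument is essentially the one you give, so there is nothing to add.
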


    Considering $\mathcal{H}_1=\mathcal{H}_2$ and $A=D=0$,  $B= C$ in Corollary \ref{cor1}, and using Lemma \ref{lem4} (2), we obtain an upper bound for the numerical radius $w(B)$ of $B\in \mathcal{B}(\mathcal{H})$.
    
    \begin{cor}
    If $B\in \mathcal{B}(\mathcal{H})$, then 
    \begin{eqnarray*}
        w(B) &\leq& \sqrt{ \|B\|^2-\frac{1}{4}\left(\|B\|^2-w(B^2) \right)}.
    \end{eqnarray*}
    \end{cor}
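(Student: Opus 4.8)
The plan is to apply Corollary \ref{cor1} directly to the operator matrix $\begin{bmatrix} 0 & B \\ B & 0\end{bmatrix}$ acting on $\mathcal{H}\oplus\mathcal{H}$, where $\mathcal{H}_1=\mathcal{H}_2=\mathcal{H}$ and we set $A=D=0$ and $C=B$. On the right-hand side of Corollary \ref{cor1}, the terms $w(A)$ and $w(D)$ vanish, $\|B\|+\|C\|=2\|B\|$, so $(\|B\|+\|C\|)^2=4\|B\|^2$, and $\|B\|\|C\|-w(CB)=\|B\|^2-w(B^2)$. Plugging in, the bound from Corollary \ref{cor1} becomes
\begin{eqnarray*}
w\left(\begin{bmatrix} 0 & B \\ B & 0\end{bmatrix}\right) \leq \frac{1}{2}\sqrt{4\|B\|^2 - \left(\|B\|^2 - w(B^2)\right)} = \sqrt{\|B\|^2 - \frac{1}{4}\left(\|B\|^2 - w(B^2)\right)}.
\end{eqnarray*}

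The second ingredient is Lemma \ref{lem4}(2), which gives the exact identity $w\left(\begin{bmatrix} 0 & B \\ B & 0\end{bmatrix}\right) = w(B)$ when the two Hilbert spaces coincide. Combining this identity with the displayed inequality above yields $w(B) \leq \sqrt{\|B\|^2 - \tfrac14(\|B\|^2 - w(B^2))}$, which is exactly the claim. So the proof is essentially a two-line substitution: instantiate Corollary \ref{cor1} with the indicated choices, then replace the left-hand side using Lemma \ref{lem4}(2).

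There is no real obstacle here — the only thing to be slightly careful about is that $w(B^2)\le w(B)^2\le\|B\|^2$, so the quantity under the square root is nonnegative and the bound is well-defined (and indeed it lies between $\tfrac12\|B\|$, attained when $B^2=0$, and $\|B\|$). Since both Corollary \ref{cor1} and Lemma \ref{lem4} are already available, nothing further is needed; one could also remark in passing that this recovers and refines the classical bound $w(B)\le\|B\|$ by the amount $\tfrac14(\|B\|^2-w(B^2))$ under the radical.
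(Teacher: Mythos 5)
Your proposal is correct and is exactly the paper's argument: instantiate Corollary \ref{cor1} with $\mathcal{H}_1=\mathcal{H}_2=\mathcal{H}$, $A=D=0$, $C=B$, and then use Lemma \ref{lem4}(2) to identify the left-hand side with $w(B)$. The arithmetic simplification and the sanity check that the radicand is nonnegative are both fine.
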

    
   This bound is at most the classical bound $\|B\|.$ Therefore, from this inequality we also get the well-known facts: $w(B)=\|B\|$ (e.g. $B$ is normal) implies $\|B\|^2=w(B^2)=\|B^2\|=w^2(B).$

 Next, by considering $A=D=0$ in Corollary \ref{cor1}, and using Lemma \ref{lem4} (1), we obtain a numerical inequality for the sum of two operators.
 
   \begin{cor}\label{cor6}
        Let $\mathcal{H}_1, \mathcal{H}_2$ be complex Hilbert spaces, and let $ B\in \mathcal{B}(\mathcal{H}_2, \mathcal{H}_1)$ and $
C\in \mathcal{B}(\mathcal{H}_1, \mathcal{H}_2)$. Then
\begin{eqnarray*}
  \| B+ C^*\| \leq 2 w\left( \begin{bmatrix}
        0&B\\
        C&0
    \end{bmatrix}\right) \leq \sqrt{\left(\|B\|+\|C\|\right)^2-\left(\|B\|\|C\| -w(CB) \right)}. 
\end{eqnarray*}
\end{cor}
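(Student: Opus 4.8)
The plan is to derive Corollary~\ref{cor6} directly from Corollary~\ref{cor1} together with Lemma~\ref{lem4}(1), exactly as suggested by the placement of the statement. First I would set $A = D = 0$ in Corollary~\ref{cor1}. On the left-hand side the $2\times 2$ operator matrix becomes $\begin{bmatrix} 0 & B \\ C & 0 \end{bmatrix}$, and on the right-hand side $w(A) = w(D) = 0$, so the first term $\tfrac12(w(A)+w(D))$ vanishes and the quantity under the square root simplifies to $\left(\|B\|+\|C\|\right)^2 - \left(\|B\|\|C\| - w(CB)\right)$. Thus Corollary~\ref{cor1} immediately yields
\[
w\!\left( \begin{bmatrix} 0 & B \\ C & 0 \end{bmatrix}\right) \leq \frac12\sqrt{\left(\|B\|+\|C\|\right)^2 - \left(\|B\|\|C\| - w(CB)\right)},
\]
which is precisely the second inequality in the statement (after multiplying through by $2$).

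For the first inequality, I would invoke Lemma~\ref{lem4}(1), which gives the exact identity
\[
w\!\left( \begin{bmatrix} 0 & B \\ C & 0 \end{bmatrix}\right) = \frac12 \sup_{\theta \in \mathbb{R}} \| e^{i\theta} B + e^{-i\theta} C^* \|.
\]
Taking $\theta = 0$ inside the supremum shows $\sup_{\theta}\|e^{i\theta}B + e^{-i\theta}C^*\| \geq \|B + C^*\|$, hence $\|B + C^*\| \leq 2\, w\!\left( \begin{bmatrix} 0 & B \\ C & 0 \end{bmatrix}\right)$. Chaining this with the bound from the previous paragraph gives the full two-sided chain
\[
\|B + C^*\| \leq 2\, w\!\left( \begin{bmatrix} 0 & B \\ C & 0 \end{bmatrix}\right) \leq \sqrt{\left(\|B\|+\|C\|\right)^2 - \left(\|B\|\|C\| - w(CB)\right)},
\]
as claimed.

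There is essentially no obstacle here: the result is a routine specialization of Corollary~\ref{cor1} plus an elementary use of Lemma~\ref{lem4}(1). The only point worth a moment's care is checking that the term $-\left(\|B\|\|C\| - w(CB)\right)$ does not make the radicand negative, i.e.\ that $\|B\|\|C\| - w(CB) \le \left(\|B\|+\|C\|\right)^2$; but this holds trivially since $w(CB) \ge 0$ implies $\|B\|\|C\| - w(CB) \le \|B\|\|C\| \le \left(\|B\|+\|C\|\right)^2$, so the expression under the square root is genuinely nonnegative (indeed it is at least $\left(\|B\|+\|C\|\right)^2 - \|B\|\|C\| = \|B\|^2 + \|B\|\|C\| + \|C\|^2 \ge 0$). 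One may also remark that since $w(CB) \le \|CB\| \le \|B\|\|C\|$, the right-hand bound is at most $\|B\|+\|C\|$, so Corollary~\ref{cor6} sharpens the standard estimate $\|B+C^*\| \le \|B\|+\|C\|$ and recovers a refinement of the classical bound $2w\!\left(\begin{bmatrix} 0 & B \\ C & 0\end{bmatrix}\right) \le \|B\|+\|C\|$.
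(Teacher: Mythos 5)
Your proposal is correct and follows exactly the paper's intended route: the paper derives Corollary~\ref{cor6} by setting $A=D=0$ in Corollary~\ref{cor1} for the second inequality and invoking Lemma~\ref{lem4}(1) (with the supremum bounded below by the $\theta=0$ term) for the first. The added check that the radicand is nonnegative is a harmless extra remark.
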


\begin{remark}
    (1) Clearly the right hand side term is at most $ \|B\|+\|C\|,$ so Corollary \ref{cor6} is a nice improvement of the triangle inequality of the operator norm.

    (2) The second inequality in Corollary \ref{cor6} refines and generalizes $w\left( \begin{bmatrix}
        0&B\\
        C&0
    \end{bmatrix}\right) \leq \frac12 (\|B\|+\|C\|)$ for all $B,C\in \mathcal{B}(\mathcal{H}),$ given in \cite[Theorem 2.3]{Hiz2011}.
\end{remark}

Again, from Corollary \ref{cor1}, we deduce an inequality for the sum of two positive bounded operators:

\begin{cor}\label{positive}
    Let $A,B\in \mathcal{B}(\mathcal{H})$ be positive. Then
    \begin{eqnarray*}
        \|A+B\| &\leq& \frac12 \left( \|A\|+\|B\|\right) \\
        &+& \frac{1}{2}\sqrt{\left( \|A\|-\|B\|\right)^2+ \left( \|A^{1-t}B^{1-\alpha} \| + \|B^{\alpha} A^{t}   \| \right)^2-\left( \|A^{1-t}B^{1-\alpha} \| \| B^{\alpha} A^{t}\|   -w(B^{\alpha} A^{}B^{1-\alpha} ) \right) },
    \end{eqnarray*}
   for all $\alpha,t \in [0,1].$ 
\end{cor}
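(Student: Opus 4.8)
The plan is to exhibit $\|A+B\|$ as the spectral radius of a concrete $2\times 2$ operator matrix and then feed that matrix into Corollary~\ref{cor1}. Fix $\alpha,t\in[0,1]$. Since $A,B\geq 0$, the fractional powers $A^{t},A^{1-t},B^{\alpha},B^{1-\alpha}$ are bounded positive operators, and the functional calculus gives $A^{1-t}A^{t}=A$ and $B^{1-\alpha}B^{\alpha}=B$. Introduce the bounded operators
$$X=\begin{bmatrix} A^{1-t}\\ B^{\alpha}\end{bmatrix}\colon \mathcal H\to\mathcal H\oplus\mathcal H,\qquad Y=\begin{bmatrix} A^{t}& B^{1-\alpha}\end{bmatrix}\colon \mathcal H\oplus\mathcal H\to\mathcal H,$$
and compute, using block multiplication,
$$M:=XY=\begin{bmatrix} A & A^{1-t}B^{1-\alpha}\\ B^{\alpha}A^{t} & B\end{bmatrix},\qquad YX=A^{t}A^{1-t}+B^{1-\alpha}B^{\alpha}=A+B.$$

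Next I would use the standard fact that $\sigma(XY)\setminus\{0\}=\sigma(YX)\setminus\{0\}$ (valid for bounded operators between possibly different Hilbert spaces, via the resolvent identity $(\lambda-XY)^{-1}=\lambda^{-1}(I+X(\lambda-YX)^{-1}Y)$ for $\lambda\neq 0$). Hence $r(M)=r(A+B)$, and since $A+B\geq 0$ we get $\|A+B\|=r(A+B)=r(M)\leq w(M)$, the last step being the elementary bound $r(\cdot)\leq w(\cdot)$.

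Finally, I would apply Corollary~\ref{cor1} to $M$, identifying its diagonal entries $A$ and $B$ (so that $w(A)=\|A\|$ and $w(B)=\|B\|$ by positivity), its off-diagonal entries $A^{1-t}B^{1-\alpha}$ and $B^{\alpha}A^{t}$, and the product $(B^{\alpha}A^{t})(A^{1-t}B^{1-\alpha})=B^{\alpha}AB^{1-\alpha}$; substituting these into the bound of Corollary~\ref{cor1} produces exactly the asserted inequality, for every $\alpha,t\in[0,1]$. The argument is essentially bookkeeping once the factorization $M=XY$ with $YX=A+B$ is spotted; the only point that needs care is the identity $r(M)=\|A+B\|$, which rests on the ``$\sigma(XY)$ versus $\sigma(YX)$'' lemma together with the observation that positivity of $A+B$ turns its spectral radius into its norm.
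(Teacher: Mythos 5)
Your proof is correct, and the final step coincides with the paper's: both arguments reduce the claim to the inequality $\|A+B\|\leq w(M)$ for $M=\begin{bmatrix} A & A^{1-t}B^{1-\alpha}\\ B^{\alpha}A^{t} & B\end{bmatrix}$ and then apply Corollary~\ref{cor1} to $M$, using $w(A)=\|A\|$ and $w(B)=\|B\|$ for positive operators and $(B^{\alpha}A^{t})(A^{1-t}B^{1-\alpha})=B^{\alpha}AB^{1-\alpha}$. Where you differ is in how the key inequality $\|A+B\|\leq w(M)$ is obtained: the paper simply cites Theorem~5.2 of \cite{Bhunia-arxiv}, whereas you prove it from scratch via the factorization $M=XY$ with $X=\begin{bmatrix} A^{1-t}\\ B^{\alpha}\end{bmatrix}$, $Y=\begin{bmatrix} A^{t}& B^{1-\alpha}\end{bmatrix}$, the identity $r(XY)=r(YX)$ for operators between different spaces, and the fact that positivity of $A+B$ gives $r(A+B)=\|A+B\|$. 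All of these steps are sound (and indeed $r(XY)=r(YX)$ is exactly the device the paper itself uses in Proposition~\ref{th3}), so your version has the advantage of being self-contained within the tools already present in the paper, at the cost of redoing a computation that the cited external result packages up. Either way the bookkeeping in the application of Corollary~\ref{cor1} is identical, and your argument is complete.
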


\begin{proof}
   From \cite[Theorem 5.2]{Bhunia-arxiv}, we have
   $\|A+B\| \leq w\left( \begin{bmatrix}
       A& A^{1-t}B^{1-\alpha}\\
       B^{\alpha} A^{t}& B
   \end{bmatrix}\right) .$ This and Corollary \ref{cor1} imply the desired inequality.  
\end{proof}

In particular, for $\alpha=t=\frac12,$ we have
     \begin{eqnarray}\label{posi}
        \|A+B\| &\leq& \frac12 \left( \|A\|+\|B\|\right) 
        + \frac{1}{2}\sqrt{\left( \|A\|-\|B\|\right)^2+4 \|A^{1/2}B^{1/2} \|^2}.
    \end{eqnarray}
This is also proved by Kittaneh \cite{Kittaneh-JOT}. Hence, Corollary \ref{positive} generalizes Kittaneh's inequality \eqref{posi}.

Based on Corollary \eqref{cor-s1}, we obtain a numerical radius inequity for the product of two operators:
\begin{cor}
    Let $A,B\in \mathcal{B}(\mathcal{H}).$ Then
\begin{eqnarray}\label{p--1-}
    w(AB) &\leq& \frac12 w(BA)+\frac14 \sqrt{3\|A\|^2\|B\|^2+w\left(|A|^2|B^*|^2\right)}.
\end{eqnarray}

Moreover, if $AB=BA,$ then
\begin{eqnarray}\label{p--1}
    w(AB) &\leq& \frac12 \sqrt{3\|A\|^2\|B\|^2+w\left(|A|^2|B^*|^2\right)}.
\end{eqnarray}
\end{cor}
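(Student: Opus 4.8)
The plan is to reduce the numerical radius of the product $AB$ to the spectral radius of a sum $A_1B_1+A_2B_2$ and then invoke Corollary~\ref{cor-s1}. The bridge is the rotation trick: combining Lemma~\ref{lem4}(1) and Lemma~\ref{lem4}(2) (take $B=C=AB$) gives $w(AB)=\frac12\sup_{\theta\in\mathbb{R}}\|e^{i\theta}AB+e^{-i\theta}B^*A^*\|$, and since $e^{i\theta}AB+e^{-i\theta}B^*A^*=e^{i\theta}AB+(e^{i\theta}AB)^*$ is self-adjoint, its norm equals its spectral radius. Hence
\begin{equation*}
2w(AB)=\sup_{\theta\in\mathbb{R}} r\bigl(e^{i\theta}AB+e^{-i\theta}B^*A^*\bigr).
\end{equation*}

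Next I would fix $\theta\in\mathbb{R}$ and write $e^{i\theta}AB+e^{-i\theta}B^*A^*=A_1B_1+A_2B_2$ with $A_1=e^{i\theta}A$, $B_1=B$, $A_2=B^*$, $B_2=e^{-i\theta}A^*$ (all operators on $\mathcal{H}$, so $\mathcal{H}_1=\mathcal{H}_2=\mathcal{H}$ in Corollary~\ref{cor-s1}). Then I would compute the six quantities occurring in Corollary~\ref{cor-s1}: $B_1A_1=e^{i\theta}BA$ and $B_2A_2=e^{-i\theta}(BA)^*$, so $w(B_1A_1)=w(B_2A_2)=w(BA)$ (using that $w$ is unchanged under multiplication by a unimodular scalar and under taking adjoints); $B_1A_2=BB^*=|B^*|^2$ and $B_2A_1=A^*A=|A|^2$, so $\|B_1A_2\|=\|B\|^2$ and $\|B_2A_1\|=\|A\|^2$; and $B_2A_1B_1A_2=A^*A\,BB^*=|A|^2|B^*|^2$, so $w(B_2A_1B_1A_2)=w(|A|^2|B^*|^2)$. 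Substituting into Corollary~\ref{cor-s1}, the difference term $w(B_1A_1)-w(B_2A_2)$ vanishes, giving, for every $\theta$,
\begin{equation*}
r\bigl(e^{i\theta}AB+e^{-i\theta}B^*A^*\bigr)\le w(BA)+\tfrac12\sqrt{3\|A\|^2\|B\|^2+w(|A|^2|B^*|^2)}.
\end{equation*}

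Taking the supremum over $\theta$ and dividing by $2$ yields \eqref{p--1-}. For the commutative case, if $AB=BA$ then $w(BA)=w(AB)$, so \eqref{p--1-} becomes $w(AB)\le\tfrac12 w(AB)+\tfrac14\sqrt{3\|A\|^2\|B\|^2+w(|A|^2|B^*|^2)}$; subtracting $\tfrac12w(AB)$ and multiplying by $2$ gives \eqref{p--1}. I do not anticipate a genuine obstacle: the only steps needing a word of justification are the passage from norm to spectral radius via self-adjointness in the rotation formula, and the elementary identities $A^*B^*=(BA)^*$ and $A^*A\,BB^*=|A|^2|B^*|^2$; the rest is a direct substitution into Corollary~\ref{cor-s1}.
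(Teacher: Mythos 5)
Your proof is correct and follows essentially the same route as the paper: both apply Corollary \ref{cor-s1} with the choice $A_1=A$, $B_1=B$, $A_2=B^*$, $B_2=A^*$ (up to the unimodular factor $e^{i\theta}$, which you insert at the outset while the paper substitutes $e^{i\theta}A$ afterwards and takes the supremum), and both reduce $w(AB)$ to $\sup_\theta\|\mathrm{Re}(e^{i\theta}AB)\|$. The computations of the six quantities and the deduction of \eqref{p--1} from \eqref{p--1-} in the commuting case match the paper's argument.
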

\begin{proof}
   The inequality \eqref{p--1} is immediate from \eqref{p--1-}. To show \eqref{p--1-}, consider $A_1=A$, $B_1=B$, $A_2=B^*$ and $B_2=A^*$ in Corollary \ref{cor-s1} to obtain:
    \begin{eqnarray*}
        \| Re(AB) \| &\leq & \frac{1}{2} w(BA)+ \frac14 \sqrt{3\|A\|^2\|B\|^2+w\left(|A|^2|B^*|^2\right)}.
    \end{eqnarray*}
    Replacing $A$ by $e^{i\theta}A$, and then taking the supremum over all $\theta \in \mathbb{R}$, we get $\eqref{p--1-}.$
    \end{proof}

 \begin{remark}
    For $A\in \mathcal{B}(\mathcal{H})$, let $A=U|A|$ be the polar decomposition of $A.$ The generalized Aluthge transform of $A$, denoted as $\Tilde{A}_t$, is defined as  $\Tilde{A}_t=|A|^tU|A|^{1-t}$, $t\in [0,1]$.
    In particular, $\Tilde{A}_0=U^*U^2|A|$, $\Tilde{A}_1=|A|UU^*U=|A|U$ and $\Tilde{A}_{1/2}=|A|^{1/2}U|A|^{1/2}=\Tilde{A}$ (the Aluthge transform of $A$). Replace $A$ and $B$ by $U|A|^{1-t}$ and $|A|^t$, respectively, in \eqref{p--1-} to obtain:
    \begin{eqnarray}
        w(A) &\leq & \frac{1}{2} w(\Tilde{A}_t) +\frac12 \|A\|, \quad \textit{for all } t\in [0,1].
    \end{eqnarray}
    This is also given in \cite{Abu_Stu_2013}.
    Replace $A$ and $B$ by $|A|^t$ and $U|A|^{1-t}$, respectively, in \eqref{p--1-} to obtain:
    \begin{eqnarray}
       w(\Tilde{A}_t)  &\leq & \frac{1}{2} w(A)  +\frac14 \sqrt{3\|A\|^2+w(|A|^{2t} |A^*|^{2(1-t)})},  \quad \textit{for all } t\in [0,1].
    \end{eqnarray}
    In particular, for $t=\frac{1}{2}$, 
     \begin{eqnarray}
      w(\Tilde{A})  &\leq & \frac{1}{2} w(A)  +\frac14 \sqrt{3\|A\|^2+w(|A|^{} |A^*|^{})}.
    \end{eqnarray}
    \end{remark}

 \subsection*{Inequalities for Kronecker products}

A simple consequence of Theorem \ref{thm2} is to provide an upper bound for the numerical radius of the Kronecker product $A \otimes B$, where $A\in \mathcal{M}_n(\mathbb{C})$ and $B\in \mathcal{B}(\mathcal{H}).$

\begin{definition}\label{D12}
The tensor product $\mathcal{H}_1 \otimes \mathcal{H}_2$ of two complex 
Hilbert spaces $\mathcal{H}_1$ and $\mathcal{H}_2$ is defined as the
completion of the inner product space consisting of all elements of the
form $\sum_{i=1}^{n}x_i \otimes y_i$ for $ x_i \in \mathcal{H}_1$ and $ y_i
\in \mathcal{H}_2$, for $n\geq 1$, under the  inner product $\langle x
\otimes y, z \otimes w \rangle :=\langle x , z\rangle \langle y ,
w\rangle.$ 
In particular, $\mathbb{C}^n \otimes \mathcal{H} \cong \mathcal{H}^{\oplus n}$.

The Kronecker product $A \otimes B$ of $A\in
\mathcal{B}(\mathcal{H}_1)$ and $B\in \mathcal{B}(\mathcal{H}_2)$ is defined
as $(A \otimes B)(x \otimes y) := Ax \otimes By$ for $x \otimes y \in
\mathcal{H}_1 \otimes \mathcal{H}_2.$
In particular, if $A=[a_{ij}]\in \mathcal{M}_n(\mathbb{C})$ and $B\in
\mathcal{B}(\mathcal{H})$, the Kronecker product  
$A\otimes B :=  [a_{ij}B]_{i,j=1}^n \in \mathcal{B}( \mathcal{H}^{\oplus n})$
is an $n\times n$ operator matrix.
\end{definition}

In \cite[Theorem 3.4]{double}, Holbrook proved that
\begin{equation}\label{E0-1}
w(A \otimes B) \leq w(A) \| B \|.
\end{equation}

Holbrook proved this in the setting of bounded linear operators $A$ and
$B$ on a Hilbert space $\mathcal{H}$. However, this easily
generalizes to any $A \in \mathcal{B}(\mathcal{H}_1)$ and $ B \in
\mathcal{B}(\mathcal{H}_2)$, where $(\mathcal{H}_1, \mathcal{H}_2)$ denotes an
arbitrary pair of Hilbert spaces; see e.g.\ \cite[Equation
(2)]{BhuniaPMSC}. As a simple consequence of Theorem \ref{thm2},  we can deduce 
an improvement of Holbrooks's bound \eqref{E0-1}, when the complex Hilbert space $\mathcal{H}_1$ is
finite-dimensional. 

\begin{cor}\label{cor3}
    Let $A=[a_{ij}]\in \mathcal{M}_n(\mathbb{C})$ and $B\in \mathcal{B}(\mathcal{H})$. Then $ w(A \otimes B) \leq  w\left(\begin{bmatrix}
		c_{ij} 
	\end{bmatrix}_{i,j=1}^n  \right),$  \textit{where}
    \begin{eqnarray*}
       c_{ij}=  \begin{cases}
       |a_{ii}|w(B) & \textit{ if } i=j,\\
       \sqrt{(|a_{ij}|+|a_{ji}|)^2\|B\|^2-|a_{ij}a_{ji}| \left( \|B\|^2-w(B^2)\right)} & \textit{ if } i<j,\\
       0 & \textit{ if } i>j.
   \end{cases}
   \end{eqnarray*}
        \end{cor}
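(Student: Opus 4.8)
The plan is to apply Theorem \ref{thm2} directly to the $n\times n$ operator matrix $A\otimes B = [a_{ij}B]_{i,j=1}^n$, viewing it as an operator matrix with each block lying in $\mathcal{B}(\mathcal{H})$ (so all $\mathcal{H}_i = \mathcal{H}$). The diagonal blocks are $a_{ii}B$, and for $i\neq j$ the $(i,j)$ block is $a_{ij}B$. Theorem \ref{thm2} then gives $w(A\otimes B)\le w([c_{ij}])$ where $c_{ii}=w(a_{ii}B)=|a_{ii}|\,w(B)$ (using homogeneity of the numerical radius), and for $i<j$,
\[
c_{ij}=\sqrt{\bigl(\|a_{ij}B\|+\|a_{ji}B\|\bigr)^2-\bigl(\|a_{ij}B\|\,\|a_{ji}B\|-w\bigl((a_{ji}B)(a_{ij}B)\bigr)\bigr)}.
\]

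Next I would simplify this expression using scalar homogeneity: $\|a_{ij}B\|=|a_{ij}|\,\|B\|$ and $\|a_{ji}B\|=|a_{ji}|\,\|B\|$, so $(\|a_{ij}B\|+\|a_{ji}B\|)^2=(|a_{ij}|+|a_{ji}|)^2\|B\|^2$ and $\|a_{ij}B\|\,\|a_{ji}B\|=|a_{ij}a_{ji}|\,\|B\|^2$. For the last term, $(a_{ji}B)(a_{ij}B)=a_{ij}a_{ji}B^2$, hence $w(a_{ij}a_{ji}B^2)=|a_{ij}a_{ji}|\,w(B^2)$. Substituting,
\[
c_{ij}=\sqrt{(|a_{ij}|+|a_{ji}|)^2\|B\|^2-|a_{ij}a_{ji}|\bigl(\|B\|^2-w(B^2)\bigr)},
\]
which is exactly the claimed entry. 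The $i>j$ case gives $c_{ij}=0$ as in Theorem \ref{thm2}. This yields the stated inequality.

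I don't anticipate a serious obstacle here: the result is a straightforward specialization of Theorem \ref{thm2} combined with the elementary homogeneity properties $w(\lambda T)=|\lambda|\,w(T)$ and $\|\lambda T\|=|\lambda|\,\|T\|$ for scalars $\lambda$. The only point requiring a line of care is the identification $A\otimes B=[a_{ij}B]$ as an operator matrix in $\mathcal{B}(\mathcal{H}^{\oplus n})$ under the isomorphism $\mathbb{C}^n\otimes\mathcal{H}\cong\mathcal{H}^{\oplus n}$, which is already recorded in Definition \ref{D12}, so no new work is needed. If desired, one could also remark (as in Remark \ref{rem1}, via \eqref{p9} and \eqref{imp}) that when all $a_{ij}\ge 0$ this bound refines Holbrook's inequality \eqref{E0-1}, since each $c_{ij}\le (|a_{ij}|+|a_{ji}|)\|B\|$ and $c_{ii}=|a_{ii}|w(B)\le |a_{ii}|\|B\|$, so $w([c_{ij}])\le w([\,|a_{ij}|\,]\,)\|B\|=w(A)\|B\|$; but that belongs to an accompanying remark rather than the proof of the corollary itself.
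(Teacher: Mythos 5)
Your proof is correct and follows exactly the paper's route: identify $A\otimes B$ with the operator matrix $[a_{ij}B]$ in $\mathcal{B}(\mathcal{H}^{\oplus n})$ and apply Theorem \ref{thm2}, simplifying via homogeneity of $\|\cdot\|$ and $w(\cdot)$ and the identity $(a_{ji}B)(a_{ij}B)=a_{ij}a_{ji}B^2$. The paper states this in one line; your version merely spells out the same computation.
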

        
        \begin{proof}
            Since $A\otimes B =  [a_{ij}B]_{i,j=1}^n \in \mathcal{B}( \mathcal{H}^{\oplus n})$
is an $n\times n$ operator matrix,  the desired bound follows from Theorem \ref{thm2}.
        \end{proof}

\begin{remark}
(1) Similar to Remark \ref{rem1}, one can show that the bound in Corollary  \ref{cor3} is at most $w\left([|a_{ij}|]_{i,j=1}^n\right) \|B\|.$ 
Therefore,
Corollary  \ref{cor3} refines Holbrook's bound \eqref{E0-1}, when all
 entries of $A$ are non-negative.

 (2) Also Corollary \ref{cor3} refines the bound $w(A \otimes B) \leq w({C^{\circ}}),$ where ${ C^{\circ}}=[{c}_{ij}]_{i,j=1}^n$ is an $n\times n$ matrix with diagonal entries ${c}_{ii}=|a_{ii}|w(B)$ and off-diagonal entries ${c}_{ij}=|a_{ij}|\|B\|$ for all $1\leq i\neq j \leq n.$ This bound was shown  in \cite[Theorem 1.3]{Khare}.
 \end{remark}

\section{Berezin radius inequalities for $n\times n$ operator matrices}\label{S-ber}

Here we obtain a Berezin number inequality for general $n\times n$ operator matrices, which refines Bakherad's inequality \eqref{p-b}. First, we record an inner-product inequality: 
\begin{lemma}\label{lem-b}
  Let  $A_{12}\in \mathcal{B}(\mathcal{H}(\Omega_2), \mathcal{H}(\Omega_1))$ and  $A_{21}\in \mathcal{B}(\mathcal{H}(\Omega_1), \mathcal{H}(\Omega_2)).$ Then 
  \begin{eqnarray*}
      |\langle A_{12}\hat{k}_{\lambda_2}, \hat{k}_{\lambda_1}\rangle | + |\langle A_{21}\hat{k}_{\lambda_1}, \hat{k}_{\lambda_2}\rangle| 
      \leq \sqrt{ \left(\|A_{12}\|_{ber}+\|A_{21}^*\|_{ber}\right)^2- \left( \|A_{12}\|_{ber}\|A_{21}^*\|_{ber}- \textbf{ber} (A_{21}A_{12})\right) },
  \end{eqnarray*}
  for all $\hat{k}_{\lambda_1}\in  \mathcal{H}(\Omega_1)$ and $ \hat{k}_{\lambda_2}\in \mathcal{H}(\Omega_2).$
\end{lemma}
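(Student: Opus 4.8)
The plan is to mirror the proof of Lemma \ref{lemma4}, replacing the generic unit vectors $x,y$ there by normalized reproducing kernels $\hat{k}_{\lambda_1},\hat{k}_{\lambda_2}$ and tracking carefully that every norm of an operator applied to a kernel can be controlled by the Berezin norm rather than the operator norm. First I would abbreviate $u=\hat{k}_{\lambda_1}\in\mathcal{H}(\Omega_1)$ and $v=\hat{k}_{\lambda_2}\in\mathcal{H}(\Omega_2)$, both unit vectors, set $a=|\langle A_{12}v,u\rangle|$ and $b=|\langle A_{21}u,v\rangle|$, and expand $(a+b)^2=a^2+b^2+2ab$. For the cross term, apply Buzano's inequality (Lemma \ref{lem3}) with the choice $x=A_{12}v$, $z=u$, $y=A_{21}^*v$ (so that $\langle x,z\rangle=\langle A_{12}v,u\rangle$, $\langle z,y\rangle=\langle u,A_{21}^*v\rangle=\langle A_{21}u,v\rangle$, and $\langle x,y\rangle=\langle A_{12}v,A_{21}^*v\rangle=\langle A_{21}A_{12}v,v\rangle$), giving
\[
ab=|\langle A_{12}v,u\rangle\,\langle u,A_{21}^*v\rangle|\le \tfrac12\bigl(\|A_{12}v\|\,\|A_{21}^*v\|+|\langle A_{21}A_{12}v,v\rangle|\bigr)\|u\|^2.
\]

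Next I would bound each surviving quantity by a Berezin norm or Berezin number. Since $v=\hat{k}_{\lambda_2}$ is a normalized reproducing kernel of $\mathcal{H}(\Omega_2)$, we have $\|A_{12}v\|\le\|A_{12}\|_{ber}$ by the definition of the Berezin norm, and similarly $\|A_{21}^*v\|\le\|A_{21}^*\|_{ber}$ (note $A_{21}^*\in\mathcal{B}(\mathcal{H}(\Omega_2),\mathcal{H}(\Omega_1))$ acts on $\mathcal{H}(\Omega_2)$, so its Berezin norm is taken over $\Omega_2$, which is the correct domain). Likewise $a=|\langle A_{12}v,u\rangle|\le\|A_{12}v\|\le\|A_{12}\|_{ber}$ and $b=|\langle A_{21}u,v\rangle|=|\langle u,A_{21}^*v\rangle|\le\|A_{21}^*v\|\le\|A_{21}^*\|_{ber}$, so $a^2+b^2\le\|A_{12}\|_{ber}^2+\|A_{21}^*\|_{ber}^2$. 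Finally $A_{21}A_{12}\in\mathcal{B}(\mathcal{H}(\Omega_2))$ and $v=\hat{k}_{\lambda_2}$, so $|\langle A_{21}A_{12}v,v\rangle|\le\textbf{ber}(A_{21}A_{12})$. Collecting these bounds and writing $p=\|A_{12}\|_{ber}$, $q=\|A_{21}^*\|_{ber}$, $s=\textbf{ber}(A_{21}A_{12})$:
\[
(a+b)^2\le p^2+q^2+pq+s=(p+q)^2-(pq-s),
\]
and taking square roots gives exactly the claimed inequality.

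The only genuinely delicate point is the bookkeeping on which set the Berezin norms live over: one must be sure that $\|A_{12}v\|\le\|A_{12}\|_{ber}$ is legitimate because $v$ ranges over normalized kernels of $\mathcal{H}(\Omega_2)$ (the domain of $A_{12}$), and that in $\|A_{21}^*\|_{ber}$ the supremum is also over $\Omega_2$ since $A_{21}^*$ maps $\mathcal{H}(\Omega_2)$ to $\mathcal{H}(\Omega_1)$ — this is why the statement features $\|A_{21}^*\|_{ber}$ rather than $\|A_{21}\|_{ber}$. Everything else is the routine Buzano-plus-AM computation already carried out in Lemma \ref{lemma4}; there is no real obstacle, only the need to keep the indices straight. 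I would present the argument in the same compact two-line display style as the proof of Lemma \ref{lemma4}, without belaboring the elementary estimates.
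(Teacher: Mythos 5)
Your proof is correct and is exactly what the paper intends: the paper's proof of this lemma is literally ``Similar to Lemma \ref{lemma4},'' and your argument is that adaptation, with the one genuinely necessary refinement (passing to $|\langle u, A_{21}^*v\rangle|$ so that both kernel vectors feeding the Berezin norms live over $\Omega_2$) handled correctly. Nothing to add.
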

\begin{proof}
    Similar to Lemma \ref{lemma4}. 
\end{proof}

With this lemma in hand, we now obtain an improvement of Bakherad's inequality \eqref{p-b}:

\begin{proof}[Proof of Theorem ~\ref{thm-ber}]
For $(\lambda_{1}, \lambda_2,\ldots , \lambda_{n})\in \Omega_1\times \Omega_2\times \cdots \times  \Omega_n$, let $\hat{k}_{(\lambda_{1}, \lambda_2, \ldots , \lambda_{n})}=(k_{\lambda_1}, k_{\lambda_2}, \ldots, k_{\lambda_n})$ be the corresponding normalized reproducing kernel of $\oplus_{i=1}^n \mathcal{H}(\Omega_i)$. Then similar to Theorem \ref{thm2} and using Lemma \ref{lem-b}, we obtain that
	\begin{eqnarray*}
		|\langle{\bf A}\hat{k}_{(\lambda_{1}, \ldots , \lambda_{n})}, \hat{k}_{(\lambda_{1}, \ldots , \lambda_{n})}\rangle|
		&=& \left|\sum_{i,j=1}^{n}\langle A_{ij}k_{\lambda_{j}},k_{\lambda_{i}}\rangle\right |\\
	&\leq&\sum_{j=1}^{n}\left|\langle A_{jj}k_{\lambda_{j}},k_{\lambda_{j}}\rangle\right|+\sum_{i,j=1 \atop i<j}^{n}\Big(\left|\langle A_{ij}k_{\lambda_{j}},k_{\lambda_{i}}\rangle\right|+\left|\langle A_{ji}k_{\lambda_{i}},k_{\lambda_{j}}\rangle\right|\Big)\\
		&\leq& \langle {[a_{ij}]_{i,j=1}^n}y,y \rangle, 		
	\end{eqnarray*}
where $y=\begin{pmatrix}
	\|k_{\lambda_{1}}\|,
	\|k_{\lambda_{2}}\|,
	\ldots,
	\|k_{\lambda_{n}}\|
	\end{pmatrix}^T \in \mathbb{C}^n$ with $\|y\|=1$.
Hence, $|\langle {\bf A}\hat{k}_{(\lambda_{1}, \lambda_2, \ldots, \lambda_{n})}, \hat{k}_{(\lambda_{1}, \lambda_2,\ldots , \lambda_{n})}\rangle|\leq w({[a_{ij}]_{i,j=1}^n}).$
Taking the supremum over all $(\lambda_{1}, \lambda_2, \ldots , \lambda_{n})\in \Omega_1\times \Omega_2\times \cdots \times  \Omega_n$, we get
$\textbf{ber}({\bf A})\leq w({[a_{ij}]_{i,j=1}^n}),$ as desired.
\end{proof}

From Theorem \ref{thm-ber}, and similar to Corollary \ref{cor1}, we can deduce a Berezin number inequality for $2\times 2$ operator matrices:

\begin{cor}\label{cor1-b}
Let $\mathcal{H} (\Omega_1), \mathcal{H}(\Omega_2)$ be reproducing kernel Hilbert spaces, and let $A\in \mathcal{B}(\mathcal{H}(\Omega_1)),~ B\in \mathcal{B}(\mathcal{H}(\Omega_2), \mathcal{H}(\Omega_1)),~ 
C\in \mathcal{B}(\mathcal{H}(\Omega_1), \mathcal{H}(\Omega_2))$ and  $D\in \mathcal{B}(\mathcal{H}(\Omega_2)).$ Then
\begin{eqnarray*}
        \textbf{ber}\left( \begin{bmatrix}
            A & B\\
            C & D
        \end{bmatrix}\right)
        &\leq&  \frac{1}{2}  \left(\textbf{ber}(A)+\textbf{ber}(D)\right)\\
        & +& \frac{ 1 }{2 } \sqrt{ \left(\textbf{ber}(A)-\textbf{ber}(D)\right)^2 + \left(\|B\|_{ber}+\|C^*\|_{ber}\right)^2- \left( \|B\|_{ber}\|C^*\|_{ber}- \textbf{ber} (CB)\right) } .
    \end{eqnarray*} 
\end{cor}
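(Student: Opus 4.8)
The plan is to deduce Corollary~\ref{cor1-b} directly from Theorem~\ref{thm-ber} in exactly the same way that Corollary~\ref{cor1} was deduced from Theorem~\ref{thm2}. First I would set $n=2$ and take $A_{11}=A$, $A_{12}=B$, $A_{21}=C$, $A_{22}=D$, with underlying reproducing kernel Hilbert spaces $\mathcal{H}(\Omega_1)$ and $\mathcal{H}(\Omega_2)$. Theorem~\ref{thm-ber} then gives
\begin{eqnarray*}
\textbf{ber}\left( \begin{bmatrix} A & B\\ C & D \end{bmatrix}\right)
\leq w\left( \begin{bmatrix} \textbf{ber}(A) & \beta \\ 0 & \textbf{ber}(D) \end{bmatrix}\right),
\end{eqnarray*}
where $\beta=\sqrt{ \left(\|B\|_{ber}+\|C^*\|_{ber}\right)^2- \left( \|B\|_{ber}\|C^*\|_{ber}- \textbf{ber}(CB)\right) }$.

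Next I would observe that the $2\times 2$ matrix on the right has non-negative entries, so by the identity \eqref{p9} its numerical radius equals the spectral radius of the symmetrized matrix $\begin{bmatrix} \textbf{ber}(A) & \tfrac12\beta \\ \tfrac12\beta & \textbf{ber}(D) \end{bmatrix}$. The spectral radius of this symmetric $2\times 2$ real matrix is computed explicitly from its eigenvalues: the larger eigenvalue is
\[
\frac{\textbf{ber}(A)+\textbf{ber}(D)}{2} + \frac12\sqrt{\left(\textbf{ber}(A)-\textbf{ber}(D)\right)^2+\beta^2}.
\]
Substituting $\beta^2=\left(\|B\|_{ber}+\|C^*\|_{ber}\right)^2- \left( \|B\|_{ber}\|C^*\|_{ber}- \textbf{ber}(CB)\right)$ yields precisely the claimed bound.

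There is essentially no obstacle here: the corollary is a mechanical specialization, and every ingredient — Theorem~\ref{thm-ber}, the identity \eqref{p9}, and the closed-form eigenvalue of a symmetric $2\times 2$ matrix — is already available. The only minor point worth stating carefully is that \eqref{p9} applies to matrices in $\mathcal{M}_n(\mathbb{C})$ with non-negative entries, which is satisfied since $\textbf{ber}(A),\textbf{ber}(D)\geq 0$ and $\beta\geq 0$; one should also note $\beta$ is real, i.e.\ the radicand is non-negative, which follows from $\|B\|_{ber}\|C^*\|_{ber}-\textbf{ber}(CB)\geq 0$ (a consequence of $\textbf{ber}(CB)\leq \|CB\|_{ber}\leq \|B\|_{ber}\|C^*\|_{ber}$, parallel to the estimate used in Lemma~\ref{lemma4}). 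I would present the proof in one short paragraph mirroring the proof of Corollary~\ref{cor1}.

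\begin{proof}
    Setting $n=2$, $A_{11}=A,$ $A_{12}=B$, $A_{21}=C$ and $A_{22}=D$ in Theorem \ref{thm-ber}, we get
    \begin{eqnarray*}
        \textbf{ber}\left( \begin{bmatrix}
            A & B\\
            C & D
        \end{bmatrix}\right)
        \leq  w\left( \begin{bmatrix}
            \textbf{ber}(A)& \beta \\
            0& \textbf{ber}(D)
        \end{bmatrix}\right)= r\left( \begin{bmatrix}
            \textbf{ber}(A)& \frac{1}{2}\beta \\
            \frac{1}{2}\beta & \textbf{ber}(D)
        \end{bmatrix}\right) \quad (\textit{by \eqref{p9}}),
        \end{eqnarray*}
        where $\beta=\sqrt{ \left( \|B\|_{ber}+\|C^*\|_{ber} \right)^2- \left(\|B\|_{ber} \|C^*\|_{ber}-\textbf{ber}(CB) \right)}.$ Computing the spectral radius of the symmetric $2\times 2$ matrix on the right gives the desired inequality.
\end{proof}
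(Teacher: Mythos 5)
Your proof is correct and follows exactly the route the paper intends: the paper gives no separate argument for this corollary, stating only that it follows from Theorem~\ref{thm-ber} ``similar to Corollary~\ref{cor1}'', which is precisely your specialization to $n=2$ followed by \eqref{p9} and the explicit eigenvalue of the symmetrized $2\times 2$ matrix. Nothing is missing.
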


Clearly, Corollary \ref{cor1-b} improves the following existing inequality:
\begin{eqnarray}\label{ber-e}
        \textbf{ber}\left( \begin{bmatrix}
            A & B\\
            C & D
        \end{bmatrix}\right)
        &\leq&  \frac{1}{2}  \left(\textbf{ber}(A)+\textbf{ber}(D)\right)
         + \frac{ 1 }{2 } \sqrt{ \left(\textbf{ber}(A)-\textbf{ber}(D)\right)^2 + \left(\|B\|+\|C\|\right)^2 },
    \end{eqnarray} 
     which was given in \cite{Bak_CMJ_2018}. Here we consider an example (Hardy–Hilbert space operators) to show Corollary \ref{cor1-b} is a proper improvement of \eqref{ber-e}. 

\begin{example}
   Recall that (see \cite{Paulsen_book}) the Hardy--Hilbert space of the unit disk  $\mathbb{D}= \{\lambda \in \mathbb{C} : |\lambda |<1 \}$ is denoted by ${H}^2(\mathbb{D})$ and is defined as the Hilbert space of all analytic functions on $\mathbb{D}$ having power series representations with square summable complex coefficients. It is well known that ${H}^2(\mathbb{D})$ is a reproducing kernel Hilbert space. For $\lambda \in \mathbb{D},$ the corresponding reproducing kernel of ${H}^2(\mathbb{D})$ is given by $k_{\lambda}(z)=\sum_{n=0}^{\infty}\bar{\lambda}^n z^n$.
Suppose  ${\bf M}=\begin{bmatrix}
		M_z & 	P_{\mathbb{C}}\\
		P_z & M_{z^2}
	\end{bmatrix} \in \mathcal{B}({H}^2(\mathbb{D})\oplus{H}^2(\mathbb{D}))$ be a $2\times 2$ matrix with the entries $P_{\mathbb{C}}, ~P_z,~ M_z$ and $M_{z^2}$ are respectively defined as  $P_{\mathbb{C}}(f)=\langle f,\phi_0 \rangle,~
	 P_z(f)=\langle f,\phi_1 \rangle \phi_1,~ M_z(f)=\phi_1\cdot f$ and $M_{z^2}(f)=\phi_2\cdot f$,   where $f \in {H}^2(\mathbb{D})$, $\phi_i(z)=z^i$ for all $z\in\mathbb{D}$ and $i=0,1,2$. Then $\textbf{ber}(M_z)=\textbf{ber}(M_{z^2})=1$, $\textbf{ber}(P_{z}  P_{\mathbb{C}} )=0$, $\|P_{\mathbb{C}}\|_{ber} =1$ and $ \|P_z\|_{ber}=\|P_z^*\|_{ber}=\frac12.$ 
Therefore,
	 \begin{eqnarray*}
	 	\textbf{ber}({\bf M})&\leq&  \frac{1}{2}  \left(\textbf{ber}(M_z)+\textbf{ber}(Mz^2_{})\right)\\
        & + & \frac{ 1 }{2 } \sqrt{ \left(\textbf{ber}(M_z)-\textbf{ber}(M_{z^2})\right)^2 + \left(\|P_{\mathbb{C}}\|_{ber}+\|P_z^*\|_{ber}\right)^2- \left( \|P_{\mathbb{C}}\|_{ber}\|P_z^*\|_{ber}- \textbf{ber} (P_zP_{\mathbb{C}})\right) } \\
        &=& \frac{4+\sqrt 7}{4}\\
        &<& 2=  \frac{1}{2}  \left(\textbf{ber}(M_z)+\textbf{ber}(Mz^2_{})\right)
         +  \frac{ 1 }{2 } \sqrt{ \left(\textbf{ber}(M_z)-\textbf{ber}(M_{z^2})\right)^2 + \left(\|P_{\mathbb{C}}\| +\|P_z\|\right)^2 }. 
    \end{eqnarray*}

\end{example}

\section{Spectral radius inequalities for bounded linear operators}\label{Sec3}

As an application of the improved numerical radius inequalities for $n\times n$ operator matrices, we obtain spectral radius inequalities for the sums, products and commutators of bounded linear operators, which improve and generalize the results in \cite{Abu_Stu_2013}. 

Before we prove our results, we need the following basic facts about the spectral radius of bounded linear operators.
It is well-known that $r(A)\leq w(A)$ for every $A\in \mathcal{B}(\mathcal{H}),$ and equality holds if $A$ is normal. Another important property for the spectral radius is the commutative property, i.e.,  $r(AB)=r(BA)$ for every  $A\in \mathcal{B}(\mathcal{H}_1, \mathcal{H}_2)$ and $ B\in \mathcal{B}(\mathcal{H}_2, \mathcal{H}_1).$
Also it is well-known that if $A\in \mathcal{B}(\mathcal{H}_1)$ and $B\in \mathcal{B}(\mathcal{H}_2)$, then $r\left(A\oplus B \right)= \max \{r(A), r(B) \}.$ 
With these results in hand, we can now show the following inequality.

\begin{prop} \label{th3}
Let $\mathcal{H}_1, \mathcal{H}_2, \ldots, \mathcal{H}_n$ be complex Hilbert spaces, and let $A_i \in \mathcal{B}(\mathcal{H}_i, \mathcal{H}_1) $ and $B_i\in \mathcal{B}(\mathcal{H}_1, \mathcal{H}_i)$ for all $i=1,2,\ldots,n.$
 Then $r\left(\sum_{i=1}^n A_iB_i\right) \leq  w \left(\begin{bmatrix}
			a_{ij}
		\end{bmatrix}_{i,j=1}^n \right),$ where 
	\begin{eqnarray*}
		a_{ij}=\begin{cases}
		    w(B_iA_i) & \textit{ if } i=j,\\
            \sqrt{ \left( \|B_iA_j\|+\|B_jA_i\| \right)^2- \left(\|B_iA_j\| \|B_jA_i\|-w(B_jA_iB_iA_j) \right) } & \textit{ if } i<j,\\
            0 & \textit{ if } i>j.
		\end{cases}
	\end{eqnarray*} 
    \end{prop}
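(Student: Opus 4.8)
The plan is to reduce the spectral radius of $\sum_{i=1}^n A_iB_i$ to the numerical radius of an $n\times n$ operator matrix, and then invoke Theorem \ref{thm2}. First I would form the block operator ${\bf T}=[T_{ij}]$ acting on $\oplus_{i=1}^n \mathcal{H}_i$ whose $(i,j)$ entry is $T_{ij}=B_iA_j\in\mathcal{B}(\mathcal{H}_j,\mathcal{H}_i)$; that is, ${\bf T}=BA$ where $B=\mathrm{col}(B_1,\dots,B_n)\colon\mathcal{H}_1\to\oplus_i\mathcal{H}_i$ and $A=\mathrm{row}(A_1,\dots,A_n)\colon\oplus_i\mathcal{H}_i\to\mathcal{H}_1$. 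Then $AB=\sum_{i=1}^n A_iB_i\in\mathcal{B}(\mathcal{H}_1)$, so the commutation property of the spectral radius gives $r\!\left(\sum_{i=1}^n A_iB_i\right)=r(AB)=r(BA)=r({\bf T})$. Since $r({\bf T})\leq w({\bf T})$, it remains to bound $w({\bf T})=w\left([B_iA_j]_{i,j=1}^n\right)$.

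Next I would apply Theorem \ref{thm2} directly to the operator matrix ${\bf T}=[B_iA_j]_{i,j}$. The diagonal entries are $T_{ii}=B_iA_i\in\mathcal{B}(\mathcal{H}_i)$, contributing $w(B_iA_i)$; the off-diagonal pair $(i,j)$ with $i<j$ involves $T_{ij}=B_iA_j$ and $T_{ji}=B_jA_i$, and Theorem \ref{thm2} produces the entry $\sqrt{(\|B_iA_j\|+\|B_jA_i\|)^2-(\|B_iA_j\|\,\|B_jA_i\|-w(T_{ji}T_{ij}))}$ with $T_{ji}T_{ij}=B_jA_iB_iA_j$. This is exactly the matrix $[a_{ij}]$ in the statement, so $w({\bf T})\leq w([a_{ij}]_{i,j=1}^n)$ and the proposition follows.

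The one point that needs care — and the main (mild) obstacle — is justifying $r(AB)=r({\bf T})$ when $AB$ acts on $\mathcal{H}_1$ and ${\bf T}=BA$ acts on the larger space $\oplus_i\mathcal{H}_i$; this is precisely the ``$r(XY)=r(YX)$ for $X\in\mathcal{B}(\mathcal{K}_2,\mathcal{K}_1)$, $Y\in\mathcal{B}(\mathcal{K}_1,\mathcal{K}_2)$'' fact recalled in the text just before the proposition, applied with $X=A$, $Y=B$, $\mathcal{K}_1=\mathcal{H}_1$, $\mathcal{K}_2=\oplus_i\mathcal{H}_i$. One should also remark that this statement does not require the Hilbert spaces to coincide, so the generality claimed (arbitrary $\mathcal{H}_1,\dots,\mathcal{H}_n$) is genuinely obtained. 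Everything else is bookkeeping: identifying the entries of $BA$ as the products $B_iA_j$ and matching them to the $a_{ij}$ in Theorem \ref{thm2}.
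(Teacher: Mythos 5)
Your proof is correct and follows essentially the same route as the paper: factor $\sum_i A_iB_i=AB$ with $A$ a row and $B$ a column built from the given operators, pass to $BA=[B_iA_j]_{i,j}$ via the commutation property of the spectral radius, bound $r(BA)\leq w(BA)$, and apply Theorem \ref{thm2}. The only cosmetic difference is that the paper pads $A$ and $B$ into square operator matrices on $\oplus_{i=1}^n\mathcal{H}_i$ and uses $r\left(\sum_j A_jB_j\oplus 0\right)=r\left(\sum_j A_jB_j\right)$, whereas you invoke the rectangular form of $r(XY)=r(YX)$ directly; both steps are justified by the facts recalled just before the proposition.
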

    
\begin{proof}
	By letting $A=\begin{bmatrix}
		A_1& A_{2}& \ldots & A_{n} \\
		0& 0& \ldots& 0 \\
		\vdots&\vdots& \ddots & \vdots\\
		0& 0& \ldots& 0\\
	\end{bmatrix}$ and $B=\begin{bmatrix}
	B_1& 0& \ldots & 0 \\
	B_2& 0& \ldots& 0 \\
	\vdots&\vdots& \ddots & \vdots\\
	B_n& 0& \ldots& 0\\
\end{bmatrix} $ in  $\mathcal{B}({\oplus_{i=1}^n}\mathcal{H}_i)$, we obtain 
$$r\left(\sum_{j=1}^n A_jB_j\right) =r\left(\sum_{j=1}^n A_jB_j \oplus 0\right)=r(AB)=r(BA)\leq w(BA).$$
This and Theorem \ref{thm2} imply the desired inequality.
\end{proof}

With this proposition in hand, we now obtain an improvement of Abu-Omar and Kittaneh's inequality \eqref{Stud}:

\begin{proof}[Proof of Corollary ~\ref{cor-s1}]
    From Proposition \ref{th3} (for $n=2$), and using \eqref{p9}, we get
    \begin{eqnarray*}
        r(A_1B_1+A_2B_2) & \leq &   \frac{ 1 }{2 } \left(w(B_1A_1)+w(B_2A_2) \right) \\
        && + \frac{ 1 }{2 } \sqrt{ \left(w(B_1A_1)-w(B_2A_2)\right)^2 + (\|B_1A_2\|+\|B_2A_1\|)^2 -\alpha},
    \end{eqnarray*}
    where $\alpha=\|B_1A_2\|\|B_2A_1\| -w(B_2A_1 B_1A_2) $.
    The desired inequality follows by replacing $A_1$ and $B_1$ by $tA_1$ and $\frac1t B_1$, respectively, and then taking the infimum over $t>0.$
 \end{proof}

 It is well known that if $A,B\in \mathcal{B}(\mathcal{H})$ with $AB=BA$, then $r(A+B)\leq r(A)+r(B)$ and $r(AB)\leq r(A)r(B).$ But, in general, for non-commutative bounded linear operators, the spectral radius is neither subadditive nor submultiplicative. (To see this consider two-dimensional example, with $A=\begin{bmatrix}
     0 &1\\
     0&0
 \end{bmatrix}$ and $B=\begin{bmatrix}
     0 &0\\
     1&0
 \end{bmatrix}$.)
In this regard, we deduce the spectral radius inequalities for the sums, products and commutators of operators, which refine the existing inequalities in \cite{Abu_Stu_2013}.
In \cite[Corollary 2.4]{Abu_Stu_2013}, it was shown that
\begin{eqnarray*}
        r(A+B) \leq   \frac{ 1 }{2 } \left(w(A)+w(B)\right) + \frac12 \sqrt{ \left(w(A)-w(B)\right)^2 + 4 \min \left\{  \|AB\|, \|BA\|\right\}}.
    \end{eqnarray*}

We improve this inequality in the following corollary, where the term  $4 \min \left\{  \|AB\|, \|BA\|\right\}$ is replaced by $\min \left\{  3\|AB\|+w(AB), 3\|BA\|+w(BA) \right\}.$

 \begin{cor}\label{cor-s2}
     Let $A,B\in \mathcal{B}(\mathcal{H}).$ Then
    \begin{eqnarray*}
        r(A+B) &\leq &  \frac{ 1 }{2 } \left(w(A)+w(B)\right)\\
        &&+  \frac{ 1 }{2 } \sqrt{ \left(w(A)-w(B)\right)^2 +  \min \left\{  3\|AB\|+w(AB), 3\|BA\|+w(BA) \right\}} . 
    \end{eqnarray*}
 \end{cor}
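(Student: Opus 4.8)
The plan is to apply Corollary~\ref{cor-s1} twice, each time with a carefully chosen one-sided factorization of the two summands, and then to exploit the similarity-invariance $r(A+B)=r(B+A)$ together with a final minimization. The whole point is to feed Corollary~\ref{cor-s1} a decomposition $A+B=A_1B_1+A_2B_2$ for which the off-diagonal product $\|B_1A_2\|\,\|B_2A_1\|$ collapses to $\|BA\|$ (resp.\ $\|AB\|$) rather than to the cruder $\|A\|\,\|B\|$.

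First I would take $\mathcal{H}_1=\mathcal{H}_2=\mathcal{H}$ and set $A_1=A$, $B_1=I$, $A_2=I$, $B_2=B$, so that $A_1B_1+A_2B_2=A\cdot I+I\cdot B=A+B$. Placing the identity in the ``inner'' slots makes one cross-factor trivial: here $B_1A_2=I$ while $B_2A_1=BA$, hence $\|B_1A_2\|\,\|B_2A_1\|=\|BA\|$. Computing the remaining quantities gives $w(B_1A_1)=w(A)$, $w(B_2A_2)=w(B)$, and $w(B_2A_1B_1A_2)=w(BA)$, so Corollary~\ref{cor-s1} yields
\[
r(A+B)\leq \tfrac12\big(w(A)+w(B)\big)+\tfrac12\sqrt{\big(w(A)-w(B)\big)^2+3\|BA\|+w(BA)}.
\]
Next, since $r(A+B)=r(B+A)$, I would run the identical argument with the roles of $A$ and $B$ interchanged---equivalently, take $A_1=B$, $B_1=I$, $A_2=I$, $B_2=A$---which collapses the cross-term to $\|AB\|$ and produces $w(AB)$ in place of $w(BA)$. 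As $\big(w(A)-w(B)\big)^2$ is unchanged, this gives the companion bound with $3\|AB\|+w(AB)$ inside the radical. Taking the smaller of the two upper bounds and using the monotonicity of $t\mapsto\sqrt{t}$ then yields exactly the claimed inequality with $\min\{3\|AB\|+w(AB),\,3\|BA\|+w(BA)\}$.

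The argument is short once the correct factorization is identified, and the step I expect to be the crux is precisely the placement of the identity operator. The seemingly innocuous alternative $A_1=A$, $B_1=I$, $A_2=B$, $B_2=I$ also reproduces $A+B$, but it forces $B_1A_2=B$ and $B_2A_1=A$, so the product of norms becomes $\|A\|\,\|B\|$ and one obtains only the strictly weaker estimate $3\|A\|\|B\|+w(AB)$. Obtaining the stated $3\|BA\|$ (resp.\ $3\|AB\|$) hinges on choosing the factorization so that one cross-factor equals the identity and the surviving cross-factor is the product $BA$ (resp.\ $AB$) itself; I would flag this as the one place where care is needed, the rest being a direct substitution into Corollary~\ref{cor-s1}.
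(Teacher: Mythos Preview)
Your proof is correct and coincides with the paper's own argument: the paper likewise applies Corollary~\ref{cor-s1} with $A_1=A$, $B_1=A_2=I$, $B_2=B$ to obtain the $3\|BA\|+w(BA)$ bound, then switches $A$ and $B$ by symmetry and takes the minimum. Your remark about the pitfall of the alternative factorization $A_2=B$, $B_2=I$ is a useful clarification not made explicit in the paper.
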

 \begin{proof}
     Letting $\mathcal{H}_1=\mathcal{H}_2=\mathcal{H}$, and  $A_1=A,$ $B_2=B$ and $B_1=A_2=I$ in Corollary \ref{cor-s1}, we get
     \begin{eqnarray}\label{s2-1}
        r(A+B) &\leq&   \frac{ 1 }{2 } \left(w(A)+w(B) \right)+\frac12 \sqrt{ \left(w(A)-w(B)\right)^2 + 3 \|BA\|+w(BA) } .
    \end{eqnarray}
     By symmetry (by switching $A$ and $B$), it follows from \eqref{s2-1} that
      \begin{eqnarray}\label{s2-2}
        r(A+B) &\leq&   \frac{ 1 }{2 } \left(w(A)+w(B) \right)+\frac12 \sqrt{ \left(w(A)-w(B)\right)^2 + 3 \|AB\|+w(AB) }.
    \end{eqnarray}
    Hence,  the desired inequality follows by combining \eqref{s2-1} and \eqref{s2-2}.
     \end{proof}

 In \cite[Corollary 2.5]{Abu_Stu_2013}, it was shown that
  \begin{eqnarray*}
         r(AB\pm BA) 
        &\leq & \frac{ 1 }{2 } \left(w(AB)+w(BA)  \right)
         +\frac12 \sqrt{ \left(w(AB)-w(BA)\right)^2 + 4  \|A^2\|\|B^2\|}. 
    \end{eqnarray*}

We improve this inequality in the following corollary, with the term $4  \|A^2\|\|B^2\|$  is replaced by $3  \|A^2\|\|B^2\|+ \min \left\{ w(A^2B^2), w(B^2A^2) \right\}.$
     
 \begin{cor}\label{cor-s3}
        Let $A,B\in \mathcal{B}(\mathcal{H}).$ Then
    \begin{eqnarray*}
         r(AB\pm BA) 
        &\leq & \frac{ 1 }{2 } \left(w(AB)+w(BA)  \right)\\
        && +\frac12 \sqrt{ \left(w(AB)-w(BA)\right)^2 + 3  \|A^2\|\|B^2\|+ \min \left\{ w(A^2B^2), w(B^2A^2) \right\}}. 
    \end{eqnarray*}
     \end{cor}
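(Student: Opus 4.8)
The plan is to deduce Corollary \ref{cor-s3} from Corollary \ref{cor-s1} by the same mechanism that produced Corollary \ref{cor-s2}, namely a suitable choice of the operators $A_1,A_2,B_1,B_2$ (and Hilbert spaces) together with a symmetrization step. First I would observe the basic algebraic identity $AB + BA = A_1 B_1 + A_2 B_2$ upon setting $A_1 = A$, $B_1 = B$, $A_2 = B$, $B_2 = A$, all acting on $\mathcal{H}_1 = \mathcal{H}_2 = \mathcal{H}$; and for the commutator $AB - BA$ one instead takes $A_2 = -B$, $B_2 = A$ (or $A_2=B$, $B_2=-A$), which changes none of the norms or numerical radii appearing on the right-hand side, so that the $\pm$ case is handled uniformly.

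With this substitution, Corollary \ref{cor-s1} gives
\begin{eqnarray*}
r(AB\pm BA) &\leq& \frac12\bigl(w(BA)+w(AB)\bigr)\\
&& +\frac12\sqrt{\bigl(w(BA)-w(AB)\bigr)^2 + 3\|BB\|\,\|A A\| + w(A B B A)},
\end{eqnarray*}
since $B_1 A_1 = BA$, $B_2 A_2 = AB$, $B_1 A_2 = B\cdot B = B^2$, $B_2 A_1 = A\cdot A = A^2$, and $B_2 A_1 B_1 A_2 = A\cdot A\cdot B\cdot B = A^2 B^2$. (The sign is absorbed: with $A_2=-B,B_2=A$ one gets $B_1A_2 = -B^2$ but $\|{-B^2}\|=\|B^2\|$, and $B_2A_1B_1A_2 = -A^2B^2$, whose numerical radius equals $w(A^2B^2)$; similarly the other sign choice leaves everything invariant.) Thus $r(AB\pm BA)$ is bounded by the displayed quantity with $\eta = w(A^2B^2)$ and the cross term $3\|A^2\|\|B^2\|$. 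By the symmetry of $AB\pm BA$ under interchanging $A$ and $B$ — note $BA\pm AB = \pm(AB\mp BA)$ and $r$, $w$ are insensitive to an overall sign — I then re-run the argument with the roles of $A$ and $B$ swapped, which replaces $w(A^2B^2)$ by $w(B^2A^2)$ while leaving $w(AB)+w(BA)$, $|w(AB)-w(BA)|$, and $\|A^2\|\|B^2\|$ unchanged. Taking the smaller of the two resulting bounds yields $\min\{w(A^2B^2),w(B^2A^2)\}$ in the radicand, which is the claimed inequality.

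There is no serious obstacle here: the corollary is a direct specialization plus a symmetrization, and every ingredient — the identity $AB\pm BA = A_1B_1+A_2B_2$, the invariance of $\|\cdot\|$, $w(\cdot)$, $r(\cdot)$ under scalar multiples of modulus one, and the $A\leftrightarrow B$ symmetry — is elementary. The only point requiring a line of care is the bookkeeping of signs in the commutator case, to confirm that choosing $A_2=-B$ (resp.\ $B_2=-A$) genuinely produces $AB-BA$ while not altering any norm or numerical radius on the right; once that is checked, combining \eqref{s2-1}-style bounds exactly as in the proof of Corollary \ref{cor-s2} completes the argument.
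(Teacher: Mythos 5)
Your proposal is correct and follows essentially the same route as the paper: the paper likewise specializes Corollary \ref{cor-s1} with $A_1=B_2=A$, $B_1=B$, $A_2=\pm B$ (so $B_1A_2=\pm B^2$, $B_2A_1=A^2$, $B_2A_1B_1A_2=\pm A^2B^2$) and then symmetrizes by swapping $A$ and $B$ to obtain the minimum. The only blemish is the displayed term $w(ABBA)$, which should read $w(A^2B^2)$ as your own subsequent computation $B_2A_1B_1A_2=A\cdot A\cdot B\cdot B$ correctly shows.
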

     
     \begin{proof}
         Letting $\mathcal{H}_1=\mathcal{H}_2=\mathcal{H}$, $A_1=B_2=A,$ $B_1=B$ and $A_2=\pm B$ in Corollary \ref{cor-s1}, we get
         \begin{eqnarray}\label{s3-1}
        r(AB\pm BA) &\leq&   \frac{ 1 }{2 } \left(w(AB)+w(BA) \right) \notag \\
        &&+\frac12 \sqrt{ \left(w(AB)-w(BA)\right)^2 + 3  \|A^2\|\|B^2\|+  w(A^2B^2)} .
    \end{eqnarray}
By symmetry (by switching $A$ and $B$), we also get 
     \begin{eqnarray}\label{s3-2}
        r(AB\pm BA) &\leq&   \frac{ 1 }{2 } \left(w(AB)+w(BA) \right)\notag \\
        &&+\frac12 \sqrt{ \left(w(AB)-w(BA)\right)^2 + 3  \|A^2\|\|B^2\|+  w(B^2A^2)}.
    \end{eqnarray}
    Thus, the desired inequality follows by combining the inequalities \eqref{s3-1} and \eqref{s3-2}.
     \end{proof}

      In \cite[Corollary 2.6]{Abu_Stu_2013}, it was shown that 
    \begin{eqnarray*}\label{1-s4-1}
         r(AB\pm BA) 
        &\leq&   w(AB) + \sqrt{   \min \{  \|A\|\|AB^2\|,  \|B\|\|A^2B\| \} } 
        \end{eqnarray*}
        and
        \begin{eqnarray*}\label{2-s4-1}
         r(AB\pm BA) 
        &\leq&   w(BA) + \sqrt{   \min \{  \|A\|\|B^2A\|, \|B\|\|BA^2\| \} } .
        \end{eqnarray*}

  Similar to the above corollaries, we improve these inequalities in the following result. 

     \begin{cor}\label{cor-s4}
      Let $A,B\in \mathcal{B}(\mathcal{H}).$ Then
    \begin{eqnarray}\label{1-s4}
         r(AB\pm BA) 
        &\leq&   w(AB) +\frac12 \sqrt{  3 \min \{  \|A\|\|AB^2\|,  \|B\|\|A^2B\| \}+  w(A^2B^2) } 
        \end{eqnarray}
        and
        \begin{eqnarray}\label{2-s4}
         r(AB\pm BA) 
        &\leq&   w(BA) +\frac12 \sqrt{  3 \min \{ \|A\|\|B^2A\|, \|B\|\|BA^2\| \}+  w(B^2A^2) } .
        \end{eqnarray}
    \end{cor}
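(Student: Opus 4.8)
The plan is to derive both inequalities \eqref{1-s4} and \eqref{2-s4} as direct specializations of Corollary \ref{cor-s1}, followed by an $A\leftrightarrow B$ symmetrization. The observation guiding the choice of specialization is this: if one can arrange the four operators in Corollary \ref{cor-s1} so that $w(B_1A_1)=w(B_2A_2)=w(AB)$, then $\frac12\bigl(w(B_1A_1)+w(B_2A_2)\bigr)=w(AB)$ and $(w(B_1A_1)-w(B_2A_2))^2=0$, so the bound of Corollary \ref{cor-s1} collapses to $w(AB)+\frac12\sqrt{3\|B_1A_2\|\,\|B_2A_1\|+w(B_2A_1B_1A_2)}$, which already has the exact shape of the right-hand side of \eqref{1-s4}. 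Writing $AB=(AB)\cdot I=I\cdot(AB)$ and letting the identity operator $I\in\mathcal{B}(\mathcal{H})$ play the role of one of the four operators is precisely the device that forces this equality of numerical radii, and the two possible placements of $I$ (as $B_1$ versus as $A_1$) produce the two expressions appearing inside the minimum.

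For \eqref{1-s4} I would first invoke Corollary \ref{cor-s1} with $\mathcal{H}_1=\mathcal{H}_2=\mathcal{H}$ and $A_1=AB$, $B_1=I$, $A_2=B$, $B_2=\pm A$, so that $A_1B_1+A_2B_2=AB\pm BA$. A short computation gives $B_1A_1=AB$, $B_2A_2=\pm AB$, $B_1A_2=B$, $B_2A_1=\pm A^2B$ and $B_2A_1B_1A_2=\pm A^2B^2$; since $w(\pm X)=w(X)$, Corollary \ref{cor-s1} yields $r(AB\pm BA)\le w(AB)+\frac12\sqrt{3\|B\|\,\|A^2B\|+w(A^2B^2)}$. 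Next I would invoke Corollary \ref{cor-s1} again, now with $A_1=I$, $B_1=AB$, $A_2=B$, $B_2=\pm A$, which also gives $A_1B_1+A_2B_2=AB\pm BA$ and $w(B_1A_1)=w(B_2A_2)=w(AB)$, but this time with $B_1A_2=AB^2$, $B_2A_1=\pm A$ and $B_2A_1B_1A_2=\pm A^2B^2$, hence $r(AB\pm BA)\le w(AB)+\frac12\sqrt{3\|A\|\,\|AB^2\|+w(A^2B^2)}$. Taking the smaller of these two right-hand sides gives \eqref{1-s4}.

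Finally, I would obtain \eqref{2-s4} from \eqref{1-s4} by interchanging $A$ and $B$: since $BA+AB=AB+BA$ and $r(BA-AB)=r(-(AB-BA))=r(AB-BA)$, the left-hand side $r(AB\pm BA)$ is unaffected by this interchange, while the right-hand side of \eqref{1-s4} turns into $w(BA)+\frac12\sqrt{3\min\{\|B\|\|BA^2\|,\,\|A\|\|B^2A\|\}+w(B^2A^2)}$, which is exactly \eqref{2-s4}. I do not anticipate a genuine obstacle here; the only point requiring care is the bookkeeping of which products $B_iA_j$ coincide (up to sign) with $AB$, $A^2B$, $AB^2$ and $A^2B^2$ under each factorization, and checking that every $\pm$ sign is harmlessly absorbed by the invariance of $w(\cdot)$ and $r(\cdot)$ under negation; in particular, because $w(B_1A_1)=w(B_2A_2)$ holds for both chosen factorizations, Corollary \ref{cor-s1} is applied directly, with no auxiliary scaling parameter needed.
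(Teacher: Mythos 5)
Your proposal is correct and matches the paper's own proof essentially verbatim: both apply Corollary \ref{cor-s1} with the two factorizations $(A_1,B_1,A_2,B_2)=(I,AB,B,\pm A)$ and $(AB,I,B,\pm A)$, take the minimum of the two resulting bounds to get \eqref{1-s4}, and then obtain \eqref{2-s4} by interchanging $A$ and $B$. Your explicit bookkeeping of the products $B_iA_j$ and of the absorbed $\pm$ signs is accurate.
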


\begin{proof}
 Letting $\mathcal{H}_1=\mathcal{H}_2=\mathcal{H}$, $A_1=I,$ $A_2=B,$ $B_1=AB$ and  $B_2=\pm A$ in Corollary \ref{cor-s1}, we get
\begin{eqnarray}\label{s4-1}
r(AB\pm BA)  &\leq&   w(AB) +\frac12 \sqrt{  3  \|A\|\|AB^2\|+  w(A^2B^2) } .
    \end{eqnarray}

 Again, letting $A_1=AB,$ $A_2=B,$ $B_1=I$ and  $B_2=\pm A$ in Corollary \ref{cor-s1}, we get
        \begin{eqnarray}\label{s4-2}
         r(AB\pm BA) 
        &\leq&   w(AB) +\frac12 \sqrt{  3  \|B\|\|A^2B^\|+  w(A^2B^2). }  \end{eqnarray}
         Combining the inequalities \eqref{s4-1} and \eqref{s4-2}, we obtain the desired inequality \eqref{1-s4}.
         The inequality \eqref{2-s4} follows from \eqref{1-s4} by symmetry (by switching $A$ and $B$).
\end{proof}

In \cite[Corollary 2.7]{Abu_Stu_2013}, is was shown that
\begin{eqnarray*}
        r(AB) 
        &\leq&   \frac{ 1 }{4 } \left(w(AB)+w(BA) \right) \\
        &&+\frac{1}{4}\sqrt{ \left(w(AB)-w(BA)\right)^2 + 4\min \left\{ \|A\|\|BAB\|,  \|B\|\|ABA\| \right\}}.  
    \end{eqnarray*}

Similar to the above corollaries, we refine this inequality in the following corollary.

 \begin{cor}\label{cor-s5}
           Let $A,B\in \mathcal{B}(\mathcal{H}).$ Then
    \begin{eqnarray*}
        r(AB) 
        &\leq&   \frac{ 1 }{4 } \left(w(AB)+w(BA) \right) 
        +\frac{1}{4}\sqrt{ \left(w(AB)-w(BA)\right)^2 + \gamma}, 
    \end{eqnarray*}
    where $\gamma=\min \left\{ 3\|A\|\|BAB\|+  w(ABAB),  3\|B\|\|ABA\|+ w(BABA) \right\}.$
     \end{cor}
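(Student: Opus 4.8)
\emph{Proof proposal.} The plan is to obtain Corollary \ref{cor-s5} as one more direct consequence of Corollary \ref{cor-s1}, exactly in the pattern of Corollaries \ref{cor-s2}--\ref{cor-s4}; the only genuine idea is to choose an \emph{asymmetric} specialization of the four operators so that the off-diagonal contribution becomes the mixed product $\|A\|\,\|BAB\|$ rather than a square of a single norm.

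First I would apply Corollary \ref{cor-s1} with $\mathcal{H}_1=\mathcal{H}_2=\mathcal{H}$ and the choices $A_1=\frac12 A$, $B_1=B$, $A_2=\frac12 AB$, $B_2=I$, so that $A_1B_1+A_2B_2=\frac12 AB+\frac12 AB=AB$. Then I would record the resulting ingredients of the right-hand side: $B_1A_1=\frac12 BA$ and $B_2A_2=\frac12 AB$, hence $w(B_1A_1)=\frac12 w(BA)$ and $w(B_2A_2)=\frac12 w(AB)$; next $B_1A_2=\frac12 BAB$ and $B_2A_1=\frac12 A$, hence $\|B_1A_2\|\,\|B_2A_1\|=\frac14\|A\|\,\|BAB\|$; and finally $B_2A_1B_1A_2=(B_2A_1)(B_1A_2)=\frac14 ABAB$, hence $w(B_2A_1B_1A_2)=\frac14 w(ABAB)$. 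Substituting these into Corollary \ref{cor-s1} and pulling the factor $\frac14$ out of the square root (the scalars $\frac12$ inside combine precisely so as to leave the coefficient $3$ in front of $\|A\|\,\|BAB\|$) yields
\[
r(AB)\leq \frac14\left(w(AB)+w(BA)\right)+\frac14\sqrt{\left(w(AB)-w(BA)\right)^2+3\|A\|\,\|BAB\|+w(ABAB)}.
\]

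Next I would invoke the commutativity of the spectral radius, $r(AB)=r(BA)$, and apply the inequality just obtained with the roles of $A$ and $B$ interchanged; since $\left(w(BA)-w(AB)\right)^2=\left(w(AB)-w(BA)\right)^2$, this produces the companion bound in which $3\|A\|\,\|BAB\|+w(ABAB)$ is replaced by $3\|B\|\,\|ABA\|+w(BABA)$. Taking the minimum of the two bounds gives exactly the asserted estimate with $\gamma=\min\{3\|A\|\|BAB\|+w(ABAB),\,3\|B\|\|ABA\|+w(BABA)\}$.

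I do not expect a real obstacle here: the argument is routine once the specialization is fixed. The points that need care are (i) the bookkeeping of the $\frac12$'s from Corollary \ref{cor-s1}, which must coalesce into the $\frac14$ prefactor and the coefficient $3$; and (ii) recognizing that the choice $B_2=I$ (rather than, say, $A_2=\frac12 A$, $B_2=B$) is what turns the off-diagonal term into a mixed product of norms, which is in turn what makes this refine \cite[Corollary 2.7]{Abu_Stu_2013}, via $w(ABAB)\le\|A\|\,\|BAB\|$ and the analogous bound for the other term.
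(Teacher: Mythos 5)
Your proposal is correct and coincides with the paper's own proof: the same specialization $A_1=\tfrac12 A$, $B_1=B$, $A_2=\tfrac12 AB$, $B_2=I$ in Corollary \ref{cor-s1}, followed by the same symmetry argument (switching $A$ and $B$, using $r(AB)=r(BA)$) and taking the minimum. The bookkeeping of the factors of $\tfrac12$ checks out exactly as you describe.
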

     \begin{proof}
          Letting $\mathcal{H}_1=\mathcal{H}_2=\mathcal{H}$, $A_1=\frac12 A,$ $A_2=\frac12 AB,$ $B_1=B$ and  $B_2=I$ in Corollary \ref{cor-s1}, we get
        \begin{eqnarray}\label{s5-1}
        r(AB) 
        \leq  \frac{ 1 }{4 } \left(w(AB)+w(BA) \right) 
        +\frac{1}{4}\sqrt{ \left(w(AB)-w(BA)\right)^2 + 3  \|A\|\|BAB\|+  w(ABAB)} . 
    \end{eqnarray}
    By symmetry (by switching $A$ and $B$), we also get
    \begin{eqnarray}\label{s5-2}
        r(AB) 
        \leq  \frac{ 1 }{4 } \left(w(AB)+w(BA) \right) 
        +\frac{1}{4}\sqrt{ \left(w(AB)-w(BA)\right)^2 + 3  \|B\|\|ABA\|+  w(BABA)} . 
    \end{eqnarray}
    Combining the inequalities \eqref{s5-1} and \eqref{s5-2}, we obtain the desired inequality.
    \end{proof}

To conclude this section, we would like to remark that the spectral radius inequalities obtained here also refine Kittaneh's inequalities in \cite{Kittaneh-AMS}.

\section{Bounds for the roots of an algebraic equation}\label{Zrors}

As another application of the improved numerical radius inequalities for $2 \times 2$ operator matrices, we deduce a refined bound for the zeros of an arbitrary
complex polynomial
\[
p(z)=z^n+a_{n}z^{n-1}+\cdots+a_2z+a_1, \qquad n \geq 2,
\]
where $a_1, a_2, \ldots, a_{n}$ are complex numbers and $a_1\neq 0$.
The Frobenius companion matrix $C(p)$ of $p(z)$ is given by
\[
C(p)=\begin{bmatrix}
-a_{n} \ \ -a_{n-1}\ \ \ldots \ \ -a_2 & -a_1 \\
I_{n-1} & {\bf 0}_{(n-1) \times 1}
\end{bmatrix}.
\]

It is well known  (see \cite[pp.~316]{Horn2}) that the eigenvalues of
$C(p)$ are precisely the zeros of the polynomial $p(z)$. Hence, if $\lambda$ is any zero of $p(z)$, then
\begin{eqnarray}\label{ze}
    |\lambda| \leq r(C(p))\leq w(C(p)).
\end{eqnarray}

In this work, by considering $C(p)$ as a $2 \times 2$ operator matrix 
and  using the numerical radius inequality in Corollary \ref{cor1}, we prove Theorem \ref{est-poly}, which refines \eqref{abd}. For this first we need the following well-known lemmas.

 \begin{lemma}[\cite{Gus}]\label{lemma-L1}
    Let $L_n=\begin{bmatrix}
{\bf 0}_{1 \times (n-1)} & 0 \\
I_{n-1} & {\bf 0}_{(n-1) \times 1}
\end{bmatrix} $. Then $w(L_n)=\cos \frac{\pi}{n+1}.$
 \end{lemma}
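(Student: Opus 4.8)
The plan is to exploit the rotational symmetry of the numerical range of $L_n$ to reduce the computation of $w(L_n)$ to a largest-eigenvalue problem for a real symmetric tridiagonal matrix, and then to diagonalize that matrix explicitly.

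First I would record the structure of $L_n$: its only nonzero entries are $(L_n)_{i+1,i}=1$ for $i=1,\dots,n-1$, i.e.\ it is the nilpotent weighted shift with all weights equal to $1$. The key symmetry step is to conjugate by the diagonal unitary $D_\theta=\mathrm{diag}(1,e^{i\theta},e^{2i\theta},\dots,e^{(n-1)i\theta})$. A direct entrywise check, $(D_\theta L_n D_\theta^{*})_{q+1,q}=e^{iq\theta}e^{-i(q-1)\theta}=e^{i\theta}$, gives $D_\theta L_n D_\theta^{*}=e^{i\theta}L_n$ for every $\theta\in\mathbb{R}$. Hence $e^{i\theta}L_n$ is unitarily equivalent to $L_n$, so $W(e^{i\theta}L_n)=W(L_n)$; since also $W(e^{i\theta}L_n)=e^{i\theta}W(L_n)$, the numerical range $W(L_n)$ is invariant under every rotation about the origin. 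A compact convex rotation-invariant subset of $\mathbb{C}$ is a disk centered at $0$, so $W(L_n)$ is such a disk.

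Next I would use this to replace the numerical radius by a largest-eigenvalue problem. Because $W(L_n)$ is a disk centered at the origin, its maximum modulus equals its maximum real part, whence
\[
w(L_n)=\max_{\zeta\in W(L_n)}|\zeta|=\max_{\zeta\in W(L_n)}\mathrm{Re}\,\zeta=\max_{\|x\|=1}\mathrm{Re}\langle L_n x,x\rangle=\lambda_{\max}\!\left(\tfrac12(L_n+L_n^{*})\right).
\]
(Equivalently, from $e^{i\theta}L_n+e^{-i\theta}L_n^{*}=D_\theta(L_n+L_n^{*})D_\theta^{*}$ all the Hermitian parts share the spectrum of $\tfrac12(L_n+L_n^{*})$, and one invokes the standard identity $w(A)=\sup_\theta\lambda_{\max}\!\big(\tfrac12(e^{i\theta}A+e^{-i\theta}A^{*})\big)$.) The remaining and main task is to determine $\lambda_{\max}(T_n)$ for $T_n:=L_n+L_n^{*}$, the real symmetric tridiagonal matrix with zero diagonal and all sub- and super-diagonal entries equal to $1$ (the adjacency matrix of the path on $n$ vertices). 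I would look for eigenvectors $v=(\sin(k\phi))_{k=1}^{n}$: the interior rows of $T_n v=\lambda v$ reduce, via $\sin((k-1)\phi)+\sin((k+1)\phi)=2\cos\phi\,\sin(k\phi)$, to $\lambda=2\cos\phi$; the row $k=1$ is automatic since $\sin 0=0$, while the row $k=n$ forces the quantization condition $\sin((n+1)\phi)=0$, i.e.\ $\phi=\frac{j\pi}{n+1}$ for $j=1,\dots,n$. These yield $n$ distinct eigenvalues $2\cos\frac{j\pi}{n+1}$, the largest being $2\cos\frac{\pi}{n+1}$ at $j=1$. Combining with the display gives $w(L_n)=\tfrac12\cdot 2\cos\frac{\pi}{n+1}=\cos\frac{\pi}{n+1}$.

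I expect the main obstacle to be the explicit diagonalization of $T_n$ — in particular verifying that the guessed sinusoidal eigenvectors satisfy both the interior three-term recurrence and the boundary equation, and confirming that $j=1$ gives the largest eigenvalue (immediate from monotonicity of $\cos$ on $[0,\pi]$). By contrast the rotational-symmetry reduction is a short unitary-conjugation computation.
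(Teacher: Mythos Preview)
Your argument is correct. The paper does not actually prove this lemma: it is quoted from \cite{Gus} (Gustafson--Rao) and used as a black box in the proof of Theorem~\ref{est-poly}. So there is no ``paper's own proof'' to compare against.

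That said, your approach is exactly the standard one found in the literature on numerical ranges of weighted shifts. The diagonal-unitary conjugation $D_\theta L_n D_\theta^{*}=e^{i\theta}L_n$ gives the circular symmetry of $W(L_n)$, which reduces $w(L_n)$ to the top eigenvalue of $\mathrm{Re}\,L_n=\tfrac12(L_n+L_n^{*})$; the latter is the path-graph adjacency matrix (scaled by $\tfrac12$), whose eigenvalues $\cos\frac{j\pi}{n+1}$ are obtained via the sinusoidal ansatz you wrote down. All steps check out, including the boundary verification at $k=n$ and the identification of $j=1$ as the maximizer. One cosmetic point: in your entrywise check you implicitly index $D_\theta$ so that $(D_\theta)_{k,k}=e^{i(k-1)\theta}$; that is consistent with what you wrote, but worth stating once to avoid an off-by-one ambiguity.
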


\begin{lemma}[\cite{Fujii}]\label{lemma-rank1}
If $a_1, a_2, \ldots, a_{n}\in \mathbb{C}$, then $w\left(\begin{bmatrix} a_1 \ \ a_2 \ \ \ldots \ \ a_n \\ {\bf 0}_{(n-1)
\times n} \end{bmatrix}\right)=\frac12 \left(|a_{1}|+ \sqrt{ \sum_{j=1}^n|a_j|^2}
\right)$.
 \end{lemma}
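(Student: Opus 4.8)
The plan is to exploit the rank-one structure of the matrix. Write $M$ for the displayed matrix and $a=(a_1,\ldots,a_n)^T\in\mathbb{C}^n$. For any $x=(x_1,\ldots,x_n)^T$ one has $Mx=\big(\sum_{j=1}^n a_jx_j\big)e_1$, so $M$ is the rank-one operator $M=e_1u^*$ with $u=\bar a:=(\bar a_1,\ldots,\bar a_n)^T$. Consequently
\[
\langle Mx,x\rangle=\Big(\sum_{j=1}^n a_jx_j\Big)\overline{x_1},
\]
and computing $w(M)$ reduces to maximizing the product $|x_1|\,\big|\sum_{j=1}^n a_jx_j\big|$ over unit vectors $x$.

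First I would separate the first coordinate from the rest. Put $\rho=|x_1|\in[0,1]$, write $x'=(x_2,\ldots,x_n)^T$ and $a'=(a_2,\ldots,a_n)^T$, so that $\|x'\|^2=1-\rho^2$ and $\sum_{j=1}^n a_jx_j=a_1x_1+\langle x',\bar a'\rangle$. Because $x_1$ and $x'$ occupy disjoint coordinates, the triangle inequality together with Cauchy--Schwarz gives
\[
\Big|\sum_{j=1}^n a_jx_j\Big|\le |a_1|\rho+\|a'\|\sqrt{1-\rho^2},
\]
where $\|a'\|=\big(\sum_{j=2}^n|a_j|^2\big)^{1/2}$. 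This bound is attainable: when $a'\ne0$ choose $x'=\sqrt{1-\rho^2}\,\bar a'/\|a'\|$ (so Cauchy--Schwarz is sharp) and adjust the phase of $x_1$ so that $a_1x_1$ and $\langle x',\bar a'\rangle$ are positively aligned. Hence the supremum of $|x_1|\,\big|\sum_j a_jx_j\big|$ equals the maximum over $\rho\in[0,1]$ of
\[
g(\rho)=\rho\big(|a_1|\rho+\|a'\|\sqrt{1-\rho^2}\big)=|a_1|\rho^2+\|a'\|\rho\sqrt{1-\rho^2}.
\]

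The last step is the single-variable optimization. Substituting $\rho=\cos\psi$ with $\psi\in[0,\pi/2]$ and applying the double-angle identities turns $g$ into
\[
g=\frac{|a_1|}{2}+\frac12\big(|a_1|\cos2\psi+\|a'\|\sin2\psi\big),
\]
whose maximum over $\psi$ is $\tfrac{|a_1|}{2}+\tfrac12\sqrt{|a_1|^2+\|a'\|^2}$, attained at the $\psi$ where the sinusoid peaks (that $\psi$ lies in $[0,\pi/2]$ since $|a_1|,\|a'\|\ge0$). As $|a_1|^2+\|a'\|^2=\sum_{j=1}^n|a_j|^2$, this yields $w(M)=\tfrac12\big(|a_1|+\sqrt{\sum_{j=1}^n|a_j|^2}\big)$. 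I expect the only delicate points to be checking that the upper bound is \emph{simultaneously} attainable in the radius $\rho$ and in the phases and direction of $x'$ — which the explicit choice of $x'$ and the phase alignment settle — together with the degenerate cases $a'=0$ and $a=0$, each of which is verified directly against the stated formula.
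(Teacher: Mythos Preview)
Your argument is correct: the rank-one representation $M=e_1\bar a^{\,*}$, the reduction to $\rho\in[0,1]$ via Cauchy--Schwarz with attainability, and the trigonometric maximization all go through, and the edge cases $a'=0$ and $a=0$ are handled.

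There is no ``paper's own proof'' to compare against here: the paper states this lemma as a citation from Fujii--Kubo \cite{Fujii} and does not prove it. Your proof is therefore a self-contained justification where the paper simply invokes the literature. For what it is worth, the standard argument (also in \cite{Fujii}) is essentially the one you wrote, sometimes phrased more abstractly via the general formula $w(xy^*)=\tfrac12\big(|\langle x,y\rangle|+\|x\|\,\|y\|\big)$ for a rank-one operator $xy^*$; specializing to $x=e_1$, $y=\bar a$ gives the result in one line. Your explicit coordinate optimization is a slightly longer but fully equivalent route.
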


With these results in hand, we can now improve Abdurakhmanov's bound \eqref{abd}:

\begin{proof}[Proof of Theorem ~\ref{est-poly}]
Consider $C(p)= \begin{bmatrix}
    A&B\\
    C&D
\end{bmatrix}$ as a $2\times 2$ operator matrix, where $A=[-a_n]_{1\times 1}$, $B=[-a_{n-1}, -a_{n-2},\ldots, -a_1]_{1 \times (n-1)},$ $C= [1,0, \ldots,0]^T_{(n-1) \times 1}$ and $D=L_{n-1}$.
Here 
$w(A)=|a_n|$, $w(D)=\cos \frac{\pi}{n}$ (by Lemma \ref{lemma-L1}), $w(CB)=\frac{1}{2}\left(|a_{n-1}|+\sqrt{\sum_{j=1}^{n-1}|a_j|^2} \right)$ (by Lemma \ref{lemma-rank1}),
$\|B\|=\sqrt{\sum_{j=1}^{n-1}|a_j|^2}$ and $\|C\|=1$. Hence,
from Corollary \ref{cor1}, we have
\begin{eqnarray*}
        w\left( C(p)\right)
        &\leq& \frac12 \left(|a_n|+\cos \frac{\pi}{n}+ \sqrt{\left(|a_n|-\cos \frac{\pi}{n} \right)^2+\left(1+\sqrt{\sum_{j=1}^{n-1}|a_j|^2} \right)^2-\alpha } \right),
	\end{eqnarray*}
    where $\alpha=\sqrt{\sum_{j=1}^{n-1}|a_j|^2}- \frac{1}{2}\left(|a_{n-1}|+\sqrt{\sum_{j=1}^{n-1}|a_j|^2} \right).$
    This and \eqref{ze} imply the desired bound.
\end{proof}




\bigskip



\noindent \textbf{Declarations.} Data sharing not applicable to this article as no datasets were generated or analysed during the current study. Author also declares that there is no financial or non-financial interests that are directly or indirectly related to the work submitted for publication.

\bibliographystyle{amsplain}

\begin{thebibliography}{99}

\bibitem{Abdurakhmanov} 
A.A. Abdurakhmanov.
Geometry of a Hausdorff domain in problems of localization of the spectrum of arbitrary matrices.
Math. USSR-Sb. 59 (1988), no. 1, 39–51.

\bibitem{Abu_Stu_2013}  
A. Abu-Omar and F. Kittaneh.
A numerical radius involving the generalized Aluthge transform.
Stud. Math. 216 (2013), no. 1, 69-75.

\bibitem{Kittaneh-AFA} 
A. Abu-Omar and F. Kittaneh.
Estimates for the numerical radius and the spectral radius of the Frobenius companion matrix and bounds for the zeros of polynomials.
Ann. Funct. Anal. 5 (2014), no. 1, 56–62.


\bibitem{Abu_LAA_2015}  
A. Abu-Omar and F. Kittaneh.
Numerical radius inequalities for $n\times n$ operator matrices.
Linear Algebra Appl. 468 (2015), 18-26.

\bibitem{Bak_CMJ_2018} M. Bakherad.
Some Berezin number inequalities for operator matrices.
Czechoslovak Math. J. 68 (2018), 997-1009.

\bibitem{BER1972} F.A. Berezin.
Covariant and contravariant symbols for operators.
Math. USSR-Izv. 6 (1972), 1117-1151.

\bibitem{BER1974} F.A. Berezin.
Quantization.
Math. USSR-Izv. 8 (1974), 1109-1163.




\bibitem{Bhunia-AdM24} 
P. Bhunia.
Sharper bounds for the numerical radius of $n\times n$ operator matrices.
Arch. Math. (Basel) 123 (2024), no. 2, 173--183.

\bibitem{Bhunia-arxiv}  P. Bhunia.
Norm inequalities for Hilbert space operators with applications.
 Linear Algebra Appl. 711 (2025), 40--67.


\bibitem{Bag} 
P. Bhunia, S. Bag and K. Paul.
Numerical radius inequalities of operator matrices with applications.
Linear Multilinear Algebra 69 (2021), no. 9, 1635–1644.

\bibitem{Khare} P. Bhunia, S.S. Damase and A. Khare.
Numerical radius and $\ell_p$ operator norm of Kronecker products: inequalities and equalities.
https://doi.org/10.48550/arXiv.2501.03638


\bibitem{book} 
P. Bhunia, S.S. Dragomir, M.S. Moslehian and K. Paul.
Lectures on numerical radius inequalities.
Infosys Sci. Found. Ser. Math. Sci. Springer Cham, 2022.


\bibitem{Bhunia_G} P. Bhunia, M.T. Garayev, K. Paul and R. Tapdigoglu.  Some new applications of Berezin symbols,
Complex Anal. Oper. Theory 17 (2023), no. 6, Paper No. 96, 15 pp.


\bibitem{PMSC} 
P. Bhunia, S. Jana and  K. Paul.
Estimates of Euclidean numerical radius for block matrices.
Proc. Math. Sci. 134 (2024), no. 2, Paper No. 20, 18 pp.


\bibitem{Paul} 
 P. Bhunia and K. Paul.
 Some improvements of numerical radius inequalities of operators and operator matrices.
Linear Multilinear Algebra 70 (2022), no. 10, 1995–2013.

\bibitem{BhuniaPMSC} 
P. Bhunia, K. Paul and A. Sen.
Numerical radius inequalities for tensor product of operators.
Proc. Math. Sci. 133 (2023), no. 1, Paper No. 3, 12 pp.


\bibitem{Somdatta} P. Bhunia, A. Sen, S. Barik and K. Paul.  Berezin number and Berezin norm inequalities for operator matrices.
Linear Multilinear Algebra 72 (2024), no. 16, 2749–2768.

\bibitem{Buzano} 
M.L. Buzano.
Generalizzatione della disuguaglianza di Cauchy-Schwarz.
Rend. Semin. Mat. Univ. Politech. Torino 31(1971/73) (1974), 405-409.


\bibitem{Fujii}
M. Fujii and F. Kubo.
Buzano's inequality and bounds for roots of algebraic equations.
Proc. Amer. Math. Soc. 117 (1993), no. 2, 359-361.

\bibitem{Gus} 
K.E. Gustafson and D.K.M. Rao.
Numerical Range.
Springer, New York, 1997.

\bibitem{Halmos} 
P.R. Halmos.
A Hilbert space problems book.
Springer Verlag, New York, 1982.


\bibitem{Hausdorff} 
F. Hausdorff.
Der Wertvorrat einer Bilinearform.
Math. Z. 3 (1919), 314-316.

\bibitem{Hiz2011} 
O. Hirzallah, F. Kittaneh and K. Shebrawi.
Numerical radius inequalities for certain $2\times 2$ operator matrices.
Integr. Equ. Oper. Theory 71 (2011), 129-147.

\bibitem{double} 
J.A.R. Holbrook.
Multiplicative properties of the numerical radius in operator theory.
J. Reine Angew. Math. 237 (1969), 166-174.


\bibitem{Horn2} 
R.A. Horn and C.R. Johnson.
Matrix Analysis.
Cambridge University Press, Cambridge, 1985.

\bibitem{Horn}
R.A. Horn and C.R. Johnson.
Topics in Matrix Analysis.
Cambridge University Press, Cambridge, 1991.


 
\bibitem{Hou_1995} 
J.C. Hou and H.K. Du.
Norm inequalities of positive operator matrices.
Integr. Equ. Oper. Theory 22 (1995) 281-294.



\bibitem{KAR_JFA_2006} 
M.T. Karaev.
Berezin symbol and invertibility of operators on the functional Hilbert spaces.
J. Funct. Anal. 238 (2006), 181-192.

\bibitem {K2013} 
M.T. Karaev.
Reproducing kernels and Berezin symbols techniques in various questions of operator theory.
Complex Anal. Oper. Theory, 7 (2013), 983-1018.

	\bibitem{KS_CVTA_2005}
    M.T. Karaev and S. Saltan.
    Some results on Berezin symbols.
    Complex Var. Theory Appl. 50 (2005), 185-193.

\bibitem{Kittaneh-JOT} 
F. Kittaneh.
Norm inequalities for sums of positive operators.
J. Operator Theory 48 (2002), no.1, 95-103.

\bibitem{Kittaneh-AMS} 
    F. Kittaneh.
    Spectral radius inequalities for Hilbert space operators.
    Proc. Amer. Math. Soc. 134 (2006), 385-390.




\bibitem{Paul_IJMMS_2012} 
K. Paul and S. Bag.
On numerical radius of a matrix and estimation of bounds for zeros of a polynomial.
Int. J. Math. Math. Sci. 2012, Art. ID 129132, 15 pp.


\bibitem{Paulsen_book}	
V.I. Paulsen and M. Raghupathi.
An Introduction to the Theory of Reproducing Kernel Hilbert Spaces, Cambridge Studies in Advanced Mathematics, vol.152, Cambridge University Press, Cambridge, 2016.




\bibitem{SDR_ACTA_2021}  S. Sahoo, N. Das and N.C. Rout.
On Berezin number inequalities for operator matrices.
Acta Math. Sin. (Engl. Ser.) 37 (2021), no. 6, 873--892.

\bibitem{Toeplitz} 
O. Toeplitz.
Das algebraische Analogon zu einem Satz von Fejer.
Math. Z. 2 (1918), 187-197.


\bibitem{Wu} 
P.Y. Wu and H.-L. Gau.
Numerical Ranges of Hilbert Space Operators. 
Encyclopedia Math. Appl. 179, Cambridge University Press, Cambridge, 2021.


\end{thebibliography}

\end{document}